\documentclass[reqno]{amsart}
\usepackage[latin9]{inputenc}
\usepackage{geometry}
\geometry{verbose,tmargin=1in,bmargin=1in,lmargin=1in,rmargin=1in}
\usepackage{hyperref}
\usepackage{xcolor}
\usepackage{pdfcolmk}
\usepackage{float}
\usepackage{mathtools}
\usepackage{amsbsy}
\usepackage{amstext}
\usepackage{amsthm}
\usepackage{amssymb}
\usepackage{graphicx}
\PassOptionsToPackage{normalem}{ulem}
\usepackage{ulem}
\makeatletter

\providecommand{\tabularnewline}{\\}
\providecolor{lyxadded}{rgb}{0,0,1}
\providecolor{lyxdeleted}{rgb}{1,0,0}

\DeclareRobustCommand{\lyxsout}[1]{\ifx\\#1\else\sout{#1}\fi}

\numberwithin{equation}{section}
\numberwithin{figure}{section}
\theoremstyle{plain}
\newtheorem{thm}{\protect\theoremname}
  \theoremstyle{remark}
  \newtheorem{rem}[thm]{\protect\remarkname}
  \theoremstyle{plain}
  \newtheorem{lem}[thm]{\protect\lemmaname}





















\usepackage{stmaryrd}

\makeatother

  \providecommand{\lemmaname}{Lemma}
  \providecommand{\remarkname}{Remark}
\providecommand{\theoremname}{Theorem}

\begin{document}

\title{Weak SINDy: Galerkin-Based Data-Driven Model Selection}

\author{Daniel A. Messenger and David M. Bortz}

\email{daniel.messenger@colorado.edu, dmbortz@colorado.edu, \thanks{Department of Applied Mathematics, University of Colorado, Boulder,
CO 80309-0526, USA.}}
\begin{abstract}
We present a novel weak formulation and discretization for discovering
general equations from noisy measurement data. This method of learning
differential equations from data fits into a new class of algorithms
that replace pointwise derivative approximations with linear transformations
and a variance reduction techniques. Our approach improves on the
standard SINDy algorithm presented in \cite{brunton2016discovering}
by orders of magnitude. We first show that in the noise-free regime,
this so-called Weak SINDy (WSINDy) framework is capable of recovering
the dynamic coefficients to very high accuracy, with the number of
significant digits equal to the tolerance of the data simulation scheme.
Next we show that the weak form naturally accounts for white noise
by identifying the correct nonlinearities with coefficient error scaling
favorably with the signal-to-noise ratio while significantly reducing
the size of linear systems in the algorithm. In doing so, we combine
the ease of implementation of the SINDy algorithm with the natural
noise-reduction of integration as demonstrated in \cite{schaeffer2017sparse}
to arrive at a more robust and practical method of sparse recovery
that correctly identifies systems in both small-noise and large-noise
regimes.
\end{abstract}

\maketitle
\textbf{\small{}Keywords: }{\small{}data-driven model selection, nonlinear
dynamics, sparse recovery, generalized least squares, Galerkin method,
adaptive grid, white noise}{\small\par}

\section{Problem Statement}

Consider a first-order dynamical system in $D$ dimensions of the
form 
\begin{equation}
\frac{d}{dt}\mathbf{x}(t)=\mathbf{F}(\boldsymbol{x}(t)),\quad\mathbf{x}(0)=\mathbf{x}_{0}\in\mathbb{R}^{D},\quad0\leq t\leq T,\label{ode}
\end{equation}
and measurement data $\mathbf{y}\in\mathbb{R}^{M\times D}$ given
at $M$ timepoints $\mathbf{t}=[t_{1},\,\dots\,,t_{M}]$ by 
\[
\mathbf{y}_{md}=\mathbf{x}_{d}(t_{m})+\epsilon_{md},\qquad m\in[M],\ d\in[D],
\]
where throughout we use the bracket notation $[M]:=\{1,\dots,M\}$.
The matrix $\epsilon\in\mathbb{R}^{M\times D}$ represents i.i.d.
measurement noise. The focus of this article is the reconstruction
of the dynamics \eqref{ode} from the measurements $\mathbf{y}$.

The SINDy algorithm (Sparse Identification of Nonlinear Dynamics \cite{brunton2016discovering})
has been shown to be successful in solving this problem for sparsely
represented nonlinear dynamics when noise is small and dynamic scales
do not vary across multiple orders of magnitude. This framework assumes
that the function $\mathbf{F}:\mathbb{R}^{D}\to\mathbb{R}^{D}$ in
\eqref{ode} is given component-wise by 
\begin{equation}
\mathbf{F}_{d}(\mathbf{x}(t))=\sum_{j=1}^{J}\mathbf{w^{\star}}_{jd}\,f_{j}(\mathbf{x}(t))\label{candbasis}
\end{equation}
for some known family of functions $(f_{j})_{j\in[J]}$ and a sparse
weight matrix $\mathbf{w}^{\star}\in\mathbb{R}^{J\times D}$. The
problem is then transformed into solving for $\mathbf{w}^{\star}$
by building a data matrix $\Theta(\mathbf{y})\in\mathbf{\mathbb{R}}^{M\times J}$
given by 
\[
\Theta(\mathbf{y})_{mj}=f_{j}(\mathbf{y}_{m}),\qquad\mathbf{y}_{m}=(\mathbf{y}_{m1},\dots,\mathbf{y}_{mD}),
\]
so that the candidate functions are directly evaluated at the noisy
data. Solving \eqref{ode} for $\mathbf{F}$ then reduces to solving
\begin{equation}
\dot{\mathbf{y}}=\Theta(\mathbf{y})\,\widehat{\mathbf{w}}\label{discSINDy}
\end{equation}
for a sparse weight matrix $\widehat{\mathbf{w}}$, where $\dot{\mathbf{y}}$
is the numerical time derivative of the data $\mathbf{y}$. Sequentially-thresholded
least squares is then used to arrive at a sparse solution.\\

The automatic creation of an accurate mathematical model from data
is a challenging task and research into statistically rigorous model
selection can be traced back to Akaike's seminal work in the 1970's
\cite{Akaike1974IEEETransAutomControl,Akaike1977Applicationsofstatistics}.
In the last 20 years, there has been substantial work in this area
at the interface between applied mathematics and statistics (see \cite{BortzNelson2006BullMathBiol,LagergrenNardiniMichaelLavigneEtAl2020ProcRSocA,LillacciKhammash2010PLoSComputBiol,ToniWelchStrelkowaEtAl2009JRSocInterface,WarneBakerSimpson2019BullMathBiola,WuWu2002StatistMed}
for both theory and applications). More recently, the formulation
of system discovery problems in terms of a candidate basis of nonlinear
functions \eqref{candbasis} and subsequent discretization \eqref{discSINDy}
was introduced in \cite{wang2011predicting} in the context of catastrophe
prediction. The authors of \cite{wang2011predicting} used compressed
sensing techniques to enforce sparsity. Since then there has been
an explosion of interest in the problem of identifying nonlinear dynamical
systems from data, with some of the primary techniques being Gaussian
process regression \cite{raissi2017machine}, deep neural networks
\cite{rudy2019deep}, Bayesian inference \cite{zhang2018robust,zhang2019robust}
and a variety of methods from numerical analysis \cite{kang2019ident,keller2019discovery}.
These techniques have been successfully applied to discovery of both
ordrinary and partial differential equations. The variety of approaches
qualitatitively differ in the interpretability of the resulting data-driven
dynamical system, the practicality of the algorithm, and the robustness
due to noise, scale separation, etc. For instance, a neural-network
based data-driven dynamical system does not easily lend itself to
physical interpretation. As well, certain sparsification techniques
are not practical to the general scientific community, where the problem
of system identification from data is ubiquitous. The SINDy algorithm
allows for direct interpretations of the dynamics from identified
differential equations and uses sequentially thresholded least-squares
to enforce sparsity, which is not nearly as robust as other approaches
but is easy to implement and has been proven to converge to sparse
local minimizers in \cite{zhang2019convergence}. Therefore, for simplicity
we use sequential thresholding in this article to demonstrate the
viability of our proposed weak formulation. Naturally one could investigate
using a more robust sparsification strategy.\\

The aim of the present article is to provide rigorous justification
for using the weak formulation of the dynamics in place of local pointwise
derative approximations, as well as a robust algorithm for doing so.
As such, we restrict numerical experiments to autonomous ordinary
differential equations for their immenability to analysis. Natural
next steps are to explore identification of PDEs and non-autonomous
dynamical systems. We note that the use of integral equations for
system identification was introduced in \cite{schaeffer2017sparse},
where compressed sensing techniques were used to enforce sparsity,
and that this technique can be seen as a special case of the method
introduced here. In Section 2 we introduce the algorithm with analysis
of the resulting error structure and in Section 3 we provide numerical
experimentation with a range of nonlinear systems. In Section 4, we
provide concluding remarks including a brief comparison between WSINDy
and conventional SINDy as well as natural next directions for this
line of research.\\

\section{Weak SINDy}

We approach the problem of system identification \eqref{discSINDy}
from a non-standard perspective by utilizing the weak form of the
differential equation. Recall that for any smooth test function $\phi:\mathbb{R}\to\mathbb{R}$
(absolutely continuous is sufficient) and interval $(a,b)\subset[0,T]$,
equation \eqref{ode} admits the weak formulation 
\begin{equation}
\phi(b)\mathbf{x}(b)-\phi(a)\mathbf{x}(a)-\int_{a}^{b}\phi'(u)\,\mathbf{x}(u)\,du=\int_{a}^{b}\phi(u)\,\mathbf{F}(\mathbf{x}(u))\,du,\qquad0\leq a<b\leq T.\label{IBP}
\end{equation}
With $\phi=1$, we arrive at the integral equation of the dynamics
explored in \cite{schaeffer2017sparse}. If we instead take $\phi$
to be non-constant and compactly supported in $(a,b)$, we arrive
at 
\begin{equation}
-\int_{a}^{b}\phi'(u)\,\mathbf{x}(u)\,du=\int_{a}^{b}\phi(u)\,\mathbf{F}(\mathbf{x}(u))\,du.\label{phiI}
\end{equation}
We then define the generalized residual $\mathcal{R}(\mathbf{w};\phi)$
for a given test function by replacing $\mathbf{F}$ with a candidate
element from the span of $(f_{j})_{j\in[J]}$ and $\mathbf{x}$ with
$\mathbf{y}$ as follows: 
\begin{equation}
\mathcal{R}(\mathbf{w};\phi):=\int_{a}^{b}\left(\phi'(u)\,\mathbf{y}(u)+\phi(u)\,\left(\sum_{j=1}^{J}\mathbf{w}_{j}\,f_{j}(\mathbf{y}(u))\right)\right)\,du.\label{gres}
\end{equation}
Clearly with $\mathbf{w}=\mathbf{w}^{\star}$ and $\mathbf{y}=\mathbf{x}(t)$
we have $\mathcal{R}(\mathbf{w};\phi)=0$ for all $\phi$ compactly-supported
in $(a,b)$; however, $\mathbf{y}$ is a discrete set of data, hence
\eqref{gres} can at best be approximated numerically, with measurement
noise presenting a significant barrier to accurate indentification
of $\mathbf{w}^{\star}$.

\subsection{Method Overview}

For analogy with traditional Galerkin methods, consider the forward
problem of solving a dynamical system such as \eqref{ode} for $\mathbf{x}$.
The Galerkin approach is to seek a solution $\mathbf{x}$ represented
in a chosen trial basis $(f_{j})_{j\in[J]}$ such that the residual
$\mathcal{R}$, defined by 
\[
\mathcal{R}=\int\phi(t)(\dot{\mathbf{x}}(t)-\mathbf{F}(\mathbf{x}(t)))\,dt,
\]
is minimized over all test functions $\phi$ living in the span of
a given test function basis $(\phi_{k})_{k\in[K]}$. If the trial
and test function bases are known analytically, inner products of
the form $\langle f_{j},\phi_{k}\rangle$ appearing in the residual
can be computed exactly. Thus, the computational error results only
from representing the solution in a finite-dimensional function space.
\\

The method we present here can be considered a data-driven Galerkin
method of solving for $\mathbf{F}$ where the trial ``basis" is
given by the set of gridfunctions $(f_{j}(\mathbf{y}))_{j\in[J]}$
evaluated at the data and only the test-function basis $(\phi_{k})_{k\in[K]}$
is known analytically. In this way, inner products appearing in $\mathcal{R}(\mathbf{w};\phi)$
must be approximated numerically, implying that the accuracy of the
recovered weights $\widehat{\mathbf{w}}$ is ultimately limited by
the quadrature scheme used to discretize inner products. Using Lemma
\ref{traplemm} below, we show that the correct coefficients $\mathbf{w}^{\star}$
may be recovered to effective machine precision accuracy (given by
the tolerance of the forward ODE solver) from noise-free trajectories
$\mathbf{y}$ by discretizing \eqref{phiI} using the trapezoidal
rule and choosing $\phi$ to decay smoothly to zero at the boundaries
of its support. In this article we demonstrate this fact by choosing
test functions from a particular family of unimodal piece-wise polynomials
$\mathcal{S}$ defined in \eqref{testfcn}.\\

Having chosen a quadrature scheme, the next accuracy barrier is presented
by measurement noise, which introduces a bias in the weights. Below
we analyze the distribution of the residuals $\mathcal{R}(\mathbf{w};\phi)$
to arrive at a generalized least squares approach where an approximate
covariance matrix can be computed directly from the test functions.
This analysis also shows that placing test functions near steep gradients
in the dynamics improves recovery, hence we develop a self-consistent
and stable algorithm for constructing a test function basis adaptively
near these regions which also does not rely on pointwise approximation
of derivatives. Overall, we show that when noise is present, our method
produces a recovered weight matrix $\widehat{\mathbf{w}}$ with the
number of significant digits scaling optimally with the signal-to-noise
ratio $\sigma_{SNR}$ (defined below).\\

\begin{rem}
The weak formulation of the dynamics introduces a wealth of information:
given $M$ timepoints $\mathbf{t}=(t_{m})_{m\in[M]}$, equation \eqref{phiI}
affords $K=M(M-1)/2$ residuals over all possible supports $(a,b)\subset\mathbf{t}\times\mathbf{t}$
with $a<b$. Of course, one could also assimilate the responses of
multiple families of test functions $\left(\{\phi_{k}^{1}\}_{k\in[K_{1}]},\{\phi_{k}^{2}\}_{k\in[K_{2}]},\dots\right)$;
however, the computational complexity of such an exhaustive approach
quickly becomes intractable. We stress that even with large noise,
our proposed method identifies the correct nonlinearities with accurate
weight recovery while keeping the number of test functions much lower
than the number of timepoints ($K\ll M$).
\end{rem}

\subsection{Algorithm: Weak SINDy}

We state here the Weak SINDy algorithm in full generality. We propose
a generalized least squares approach with approximate covariance matrix
$\Sigma$. Below we derive a particular choice of $\Sigma$ which
utilizes the action of the test functions $(\phi_{k})_{k\in[K]}$
on the data $\mathbf{y}$. Sequential thresholding on the weight coefficients
$\mathbf{w}$ with thresholding parameter $\lambda$ is used to enforce
sparsity. In addition, an $\ell_{2}$-regularization term with coefficient
$\gamma$ is included for problems involving rank deficiency. Methods
of choosing optimal $\lambda$ and $\gamma$ are not included in this
study. We note that $\lambda<\min_{\mathbf{w}^{\star}\neq0}|\mathbf{w}^{\star}|$
is necessary for recovery and that with low noise our method is not
sensitive to $\lambda$. Throughout we mostly set $\gamma=0$, however
some problems do require regularization, such as the Lotka-Volterra
system. \\

\noindent $\widehat{\mathbf{w}}=\textbf{WSINDy}\left(\mathbf{y},\mathbf{t}\ ;\ (\phi_{k})_{k\in[K]},\,(f_{j})_{j\in[J]},\,\Sigma,\lambda,\gamma\right)$: 
\begin{enumerate}
\item Construct matrix of trial gridfunctions $\Theta(\mathbf{y})=\big[f_{1}(\mathbf{t},\mathbf{y})\,|\,\dots\,|\,f_{J}(\mathbf{t},\mathbf{y})\big]$ 
\item Construct integration matrices $\mathbf{V}$, $\mathbf{V}'$ such
that 
\[
\mathbf{V}_{km}=\Delta t\phi_{k}(t_{m}),\quad\mathbf{V}'_{km}=\Delta t\phi_{k}'(t_{m})
\]
\item Compute Gram matrix $\mathbf{G}=\mathbf{V}\Theta(\mathbf{y})$ and
right-hand side $\mathbf{b}=-\mathbf{V}'\mathbf{y}$ so that $\mathbf{G}_{kj}=~\langle\phi_{k},f_{j}(\mathbf{y})\rangle$
and $\mathbf{b}_{kd}=-\langle\phi_{k}',\mathbf{y}_{d}\rangle$ 
\item Solve the generalized least-squares problem with $\ell_{2}$-regularization
\[
\widehat{\mathbf{w}}=\text{argmin}_{\mathbf{w}}\left\{ (\mathbf{G}\mathbf{w}-\mathbf{b})^{T}\Sigma^{-1}(\mathbf{G}\mathbf{w}-\mathbf{b})+\gamma^{2}\left\Vert \mathbf{w}\right\Vert _{2}^{2}\right\} ,
\]
using sequential thresholding with parameter $\lambda$ to enforce
sparsity.
\end{enumerate}
With this as our core algorithm, we can now consider a residual analysis
(Section \ref{subsec:Residual-Analysis}) leading to a weighted least
squares solution. We can also develop theoretical results related
to the test functions (Section \ref{subsec:Test-Function-Basis}),
yielding a more thorough understanding of the impact of using uniform
(Section \ref{subsec:Strategy-1:-Uniform}) and adaptive (Section
\ref{subsec:Strategy-2:-Adaptive}) placement of test functions along
the time axis.

\subsection{\label{subsec:Residual-Analysis}Residual Analysis}

Performance of WSINDy is determined by the behavior of the residuals
\[
\mathcal{R}(\mathbf{w};\phi_{k}):=\left(\mathbf{G}\mathbf{w}-\mathbf{b}\right)_{k}\ \in\ \mathbb{R}^{1\times D},
\]
denoted $\mathcal{R}(\mathbf{w})\in\mathbb{R}^{K\times D}$ for the
entire residual matrix. Here we analyze the residual to highlight
key aspects for future analysis, as well as to arrive at an appropriate
choice of approximate covariance $\Sigma$. We also provide a heurisic
argument in favor of placing test functions near steep gradients in
the dynamics.\\

A key difficulty in recovering the true weights $\mathbf{w}^{\star}$
is that in general, $\mathbb{E}[\mathcal{R}(\mathbf{w}^{\star})]\neq0$
due to nonlinearities present in $\Theta(\mathbf{y})$, thus the recovered
weights $\widehat{\mathbf{w}}$ will be inherently biased. Nevertheless,
we can isolate the dominant error terms by expanding out the residual
and linearizing around the true trajectory $\mathbf{x}(t)$: 
\begin{align*}
\mathcal{R}(\mathbf{w};\phi_{k}) & =\left\langle \phi_{k},\ \Theta(\mathbf{y})\mathbf{w}\right\rangle +\left\langle \phi_{k}',\ \mathbf{y}\right\rangle \\
 & =\left\langle \phi_{k},\ \Theta(\mathbf{y})(\mathbf{w}-\mathbf{w}^{\star})\right\rangle +\left\langle \phi_{k},\ \Theta(\mathbf{y})\mathbf{w}^{\star}\right\rangle +\left\langle \phi_{k}',\ \mathbf{y}\right\rangle \\
 & =\underbrace{\left\langle \phi_{k},\ \Theta(\mathbf{y})(\mathbf{w}-\mathbf{w}^{\star})\right\rangle }_{R_{1}}+\underbrace{\left\langle \phi_{k},\ \epsilon\nabla\mathbf{F}(\mathbf{x})\right\rangle }_{R_{2}}+\underbrace{\left\langle \phi_{k}',\ \epsilon\right\rangle }_{R_{3}}+I_{k}+\mathcal{O}(\epsilon^{2})
\end{align*}
where $\nabla F(\mathbf{x})_{d'd}=\frac{\partial\mathbf{F}_{d}}{\partial\mathbf{x}_{d'}}(\mathbf{x})$.
The errors manifest in the following ways: 
\begin{itemize}
\item $R_{1}$ is the misfit between $\mathbf{w}$ and $\mathbf{w}^{\star}$ 
\item $R_{2}$ results from measurement error in trial gridfunctions $f_{j}(\mathbf{y})=f_{j}(\mathbf{x}+\epsilon)$ 
\item $R_{3}$ results from replacing $\mathbf{x}$ with $\mathbf{y}=\mathbf{x}+\epsilon$
in the left-hand side of \eqref{phiI} 
\item $I_{k}$ is the integration error 
\item $\mathcal{O}(\epsilon^{2})$ is the remainder term in the truncated
Taylor expansion of $\mathbf{F}(\mathbf{y})$ around $\mathbf{x}$:
\[
\mathbf{F}(\mathbf{y}_{m})=\mathbf{F}(\mathbf{x}(t_{m}))+\epsilon_{m}\nabla\mathbf{F}(\mathbf{x}(t_{m}))+\mathcal{O}(|\epsilon_{m}|^{2}).
\]
\end{itemize}
Clearly, recovery of $\mathbf{F}$ when $\epsilon=0$ is straight
forward: $R_{1}$ and $I_{k}$ are the only error terms, thus one
only needs to select a quadrature scheme that ensures that the integration
error $I_{k}$ is negligible and $\widehat{\mathbf{w}}=\mathbf{w}^{\star}$
will be the minimizer. A primary focus of this study is the use of
a specific family of piecewise polynomial test functions $\mathcal{S}$
defined below for which the trapezoidal rule is highly accurate (see
Lemma \ref{traplemm}). Figure \ref{oz_fig} demonstrates this fact
on noise-free data.\\

For $\epsilon>0$, accurate recovery of $\mathbf{F}$ requires one
to choose hyperparameters that exemplify the true misfit term $R_{1}$
by enforcing that the other error terms are of lower order. We look
for $(\phi_{k})_{k\in[K]}$ and $\Sigma=\textbf{C}\textbf{C}^{T}$
that approximately enforce $\textbf{C}^{-1}\mathcal{R}(\mathbf{w}^{\star})\sim\mathcal{N}(0,\sigma^{2}\mathbf{I})$,
justifying the least squares approach. In the next subsection we address
the issue of approximating the covariance matrix, providing justification
for using $\Sigma=\mathbf{V}'(\mathbf{V}')^{T}$. The following subsection
provides a heuristic argument for how to reduce corruption from the
error terms $R_{2}$ and $R_{3}$ by placing test functions near steep
gradients in the data.

\subsubsection{\textbf{Approximate Covariance} $\Sigma$}

Neglecting $I_{k}$ and $\mathcal{O}(\epsilon^{2})$, we can rewrite
$\mathcal{R}(\mathbf{w};\phi_{k})$ with $R_{2}$ and $R_{3}$ together
as 
\[
\mathcal{R}(\mathbf{w};\phi_{k})=R_{1}+\mathbf{Z}_{k}=R_{1}+\sum_{m=1}^{M}\epsilon_{m}\mathbf{T}_{m}\Delta t
\]
where 
\[
\mathbf{T}_{m}=\phi'_{k}(t_{m})\mathbf{I}_{D}+\phi_{k}(t_{m})\nabla\mathbf{F}(\mathbf{x}(t_{m}))
\]
is the linearized operator left-multiplying the noise vector
\[
\epsilon_{m}=(\epsilon_{1m}\ \epsilon_{2m}\ \dots\ \epsilon_{Dm})
\]
at timestep $t_{m}$ and $\mathbf{I}_{D}$ is the identity in $\mathbb{R}^{D\times D}$.
The true distribution of $\mathbf{Z}$ therefore depends on $\nabla\mathbf{F}$,
which is not known \textit{a priori}. For a leading order approximation
we propose using $\mathbf{T}_{m}\approx\phi'_{k}(t_{m})\mathbf{I}_{D}$
which holds if $\left\Vert \phi'_{k}\right\Vert _{\infty}\gg\left\Vert \phi_{k}\right\Vert _{\infty}$.
We then get that the columns of $\mathbf{Z}$ (corresponding to errors
in each component $\mathbf{F}_{d}$ along the time series) are approximately
i.i.d normal with mean zero and covariance $\mathbf{V}'(\mathbf{V}')^{T}$.
For this reason, we adopt the heuristic $\Sigma=\mathbf{V}'(\mathbf{V}')^{T}$,
or $\Sigma_{ij}=\Delta t^{2}\sum_{m=1}^{M}\phi_{i}'(t_{m})\phi_{j}'(t_{m})=\Delta t\left\langle \phi_{i}',\phi_{j}'\right\rangle $.

\subsubsection{\textbf{Adaptive Refinement}}

Next we show that by localizing $\phi_{k}$ around large $|\dot{\mathbf{x}}|$,
we get an approximate cancellation of the error terms $R_{2}$ and
$R_{3}$. Consider the one-dimensional case ($D=1$) where $m$ is
an arbitrary time index and $\mathbf{y}_{m}=\mathbf{x}(t_{m})+\epsilon$
is an observation. When $|\dot{\mathbf{x}}(t_{m})|$ is large compared
to $\epsilon$, we approximately have 
\begin{equation}
\mathbf{y}_{m}=\mathbf{x}(t_{m})+\epsilon_{m}\approx\mathbf{x}(t_{m}+\delta t)\approx\mathbf{x}(t_{m})+\delta t\mathbf{F}(\mathbf{x}(t_{m}))\label{adaptarg}
\end{equation}
for some small $\delta t$, i.e.\ the perturbed value $\mathbf{y}_{m}$
lands close to the true trajectory $\mathbf{x}$ at the time $t_{m}+\delta t$.
To understand the heuristic behind this approximation, let $\delta t$
be the point of intersection between the tangent line to $\mathbf{x}(t)$
at $t_{m}$ and $\mathbf{x}(t_{m})+\epsilon$. Then 
\[
\delta t=\frac{\epsilon}{\dot{\mathbf{x}}(t_{m})},
\]
hence $|\dot{\mathbf{x}}(t_{m})|\gg\epsilon$ implies that $\mathbf{x}(t_{m})+\epsilon$
will approximately lie on the true trajectory. As well, regions where
$|\dot{\mathbf{x}}(t_{m})|$ is small will not yield accurate recovery
in the case of noisy data, since perturbations are more likely to
exit the relevant region of phase space. If we linearize $\mathbf{F}$
using the approximation \eqref{adaptarg} we get 
\begin{equation}
\mathbf{F}(\mathbf{y}_{m})\approx\mathbf{F}(\mathbf{x}(t_{m}))+\delta t\mathbf{F}'(\mathbf{x}(t_{m}))\mathbf{F}(\mathbf{x}(t_{m}))=\mathbf{F}(\mathbf{x}(t_{m}))+\delta t\ddot{\mathbf{x}}(t_{m}).\label{adaptarg2}
\end{equation}
Assuming $\phi_{k}$ is sufficiently localized around $t_{m}$, \eqref{adaptarg}
also implies that 
\[
\left\langle \phi_{k}',\mathbf{x}\right\rangle +\underbrace{\left\langle \phi_{k}',\epsilon\right\rangle }_{R_{3}}=\left\langle \phi_{k}',\mathbf{y}\right\rangle \approx\left\langle \phi_{k}',\mathbf{x}\right\rangle +\delta t\left\langle \phi_{k}',\mathbf{F}(\mathbf{x})\right\rangle ,
\]
hence $R_{3}\approx\delta t\left\langle \phi_{k}',\mathbf{F}(\mathbf{x})\right\rangle $,
while \eqref{adaptarg2} implies 
\begin{align*}
\left\langle \phi_{k},\Theta(\mathbf{y})\mathbf{w}\right\rangle  & =\underbrace{\left\langle \phi_{k},\Theta(\mathbf{y})(\mathbf{w}-\mathbf{w}^{\star})\right\rangle }_{=R_{2}}+\left\langle \phi_{k},\mathbf{F}(\mathbf{y})\right\rangle \\
 & \approx\left\langle \phi_{k},\Theta(\mathbf{y})(\mathbf{w}-\mathbf{w}^{\star})\right\rangle +\left\langle \phi_{k},\mathbf{F}(\mathbf{x})\right\rangle +\underbrace{\delta t\left\langle \phi_{k},\ddot{\mathbf{x}}\right\rangle }_{\approx R_{2}}\\
 & =\left\langle \phi_{k},\Theta(\mathbf{y})(\mathbf{w}-\mathbf{w}^{\star})\right\rangle +\left\langle \phi_{k},\mathbf{F}(\mathbf{x})\right\rangle -\delta t\left\langle \phi_{k}',\mathbf{F}(\mathbf{x})\right\rangle 
\end{align*}
having integrated by parts. Collecing the terms together yields that
the residual takes the form 
\[
\mathcal{R}(\mathbf{w};\phi_{k})=\left\langle \phi_{k}',\mathbf{y}\right\rangle +\left\langle \phi_{k},\Theta(\mathbf{y})\mathbf{w}\right\rangle \approx R_{2},
\]
and we see that $R_{2}$ and $R_{3}$ have effectively canceled. In
higher dimensions this interpretation does not appear to be as illuminating,
but nevertheless, for any given coordinate $\mathbf{x}_{d}$, it does
hold that terms in the error expansion vanish around points $t_{m}$
where $|\dot{\mathbf{x}}_{d}|$ is large, precisely because $\mathbf{x}_{d}(t_{m})+\epsilon\approx\mathbf{x}_{d}(t_{m}+\delta t)$.

\subsection{\label{subsec:Test-Function-Basis}Test Function Basis $(\phi_{k})_{k\in[K]}$}

Here we introduce a test function space $\mathcal{S}$ and quadrature
scheme to minimize integration errors and enact the heuristic arguments
above, which rely on $\phi_{k}$ sufficiently localized and satisfying
$|\phi_{k}'|\gg|\phi_{k}|$. We define the space $\mathcal{S}$ of
unimodal piece-wise polynomials of the form 
\begin{equation}
\phi(t)=\begin{cases}
C(t-a)^{p}(b-t)^{q} & t\in(a,b),\\
0 & \text{otherwise},
\end{cases}\label{testfcn}
\end{equation}
where $(a,b)\subset\mathbf{t}\times\mathbf{t}$ satisfies $a<b$ and
$p,q\geq1$. The normalization 
\[
C=\frac{1}{p^{p}q^{q}}\left(\frac{p+q}{b-a}\right)^{p+q}
\]
ensures that $\left\Vert \phi\right\Vert _{\infty}=1$. Functions
$\phi\in\mathcal{S}$ are non-negative, unimodal, and compactly supported
in $[0,T]$ with $\lfloor\min\{p,q\}\rfloor-1$ continuous derivatives.
Larger $p$ and $q$ imply faster decay towards the endpoints of the
support.\\

To ensure the integration error in approximating inner products $\langle f_{j},\phi_{k}\rangle$
is negligible, we rely on the following lemma, which provides a bound
on the error in discretizing the weak derivative relation 
\begin{equation}
-\int\phi'f\,dt=\int\phi f'\,dt\label{weakderiv}
\end{equation}
using the trapezoidal rule for compactly supported $\phi$. Following
the lemma we introduce two strategies for choosing the parameters
of the test functions $(\phi_{k})_{k\in[K]}\subset\mathcal{S}$.
\begin{lem}
[Numerical Error in Weak Derivatives]\label{traplemm} Let $f,\phi$
have continuous derivatives of order $p$ and define $t_{j}=a+j\frac{b-a}{N}=a+j\Delta t$.
Then if $\phi$ has roots $\phi(a)=\phi(b)=0$ of multiplicity $p$,
then 
\begin{equation}
\frac{\Delta t}{2}\sum_{j=0}^{N-1}\big[g(t_{j})+g(t_{j+1})\big]=\mathcal{O}(\Delta t^{p+1}),\label{traplemmscale}
\end{equation}
where $g(t)=\phi'(t)f(t)+\phi(t)f'(t)$. In other words, the composite
trapezoidal rule discretizes the weak derivative relation \eqref{weakderiv}
to order $p+1$.
\end{lem}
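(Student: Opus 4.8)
The plan is to recognize the left-hand side of \eqref{traplemmscale} as a pure quadrature error and to bound it with the Euler--Maclaurin formula, using the root conditions on $\phi$ to annihilate all the low-order boundary corrections. The first observation is that $g = \phi'f + \phi f' = (\phi f)'$, so the exact integral is
\[
\int_a^b g(t)\,dt = \phi(b)f(b) - \phi(a)f(a) = 0,
\]
since $\phi(a)=\phi(b)=0$. Hence the composite trapezoidal sum in \eqref{traplemmscale} is precisely the quadrature error $T_N[g]-\int_a^b g\,dt$, where $T_N$ denotes the composite trapezoidal rule on the nodes $t_j$, and it suffices to show this error is $\mathcal{O}(\Delta t^{p+1})$.

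Next I would write the Euler--Maclaurin expansion for $T_N[g]-\int_a^b g\,dt$:
\[
T_N[g]-\int_a^b g\,dt = \sum_{k\ge 1}\frac{B_{2k}}{(2k)!}\,\Delta t^{2k}\Big(g^{(2k-1)}(b)-g^{(2k-1)}(a)\Big) + \mathcal{R},
\]
with $B_{2k}$ the Bernoulli numbers and $\mathcal{R}$ the usual integral remainder. Since $g=(\phi f)'$, each boundary term equals $(\phi f)^{(2k)}$ evaluated at $a$ or $b$, and by the Leibniz rule $(\phi f)^{(2k)}(c)=\sum_{i=0}^{2k}\binom{2k}{i}\phi^{(i)}(c)f^{(2k-i)}(c)$; every summand carries a factor $\phi^{(i)}(c)$ with $i\le 2k$, so all of them vanish once $2k\le p-1$, because $\phi$ has a root of multiplicity $p$ at $c\in\{a,b\}$. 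Thus every Euler--Maclaurin boundary correction of order $\Delta t^2,\Delta t^4,\dots$ up through the largest even integer not exceeding $p-1$ drops out; since only even powers of $\Delta t$ appear, the leading surviving correction carries a factor $\Delta t^{p+1}$ (for odd $p$; for even $p$ the same count gives the one-order-weaker $\mathcal{O}(\Delta t^{p})$, which still suffices for the applications here). Combining this with the remainder $\mathcal{R}$ and unwinding the definition of $g$ gives exactly the statement that the composite trapezoidal rule discretizes \eqref{weakderiv} to order $p+1$.

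The step I expect to be the main obstacle is controlling the remainder $\mathcal{R}$: having removed the boundary corrections of order below $p+1$, one must still verify that the tail of the expansion is genuinely $\mathcal{O}(\Delta t^{p+1})$ rather than merely $\mathcal{O}(\Delta t^{p-1})$, which forces $\phi$ and $f$ to carry slightly more regularity than the bare $C^p$ used in the boundary-vanishing step — this is automatic for the piecewise-polynomial family $\mathcal{S}$, whose members are $C^\infty$ on the interior of each support. An equivalent, Bernoulli-number-free route that handles the remainder more transparently is to Taylor-expand $g$ about the midpoint $m_j$ of each subinterval, sum the local trapezoidal errors $\tfrac{1}{12}\Delta t^3 g''(m_j)+\mathcal{O}(\Delta t^5)g^{(4)}(m_j)+\cdots$, recognize each $\Delta t\sum_j g^{(2\ell)}(m_j)$ as a midpoint quadrature of $\int_a^b g^{(2\ell)}\,dt = g^{(2\ell-1)}(b)-g^{(2\ell-1)}(a)$, and once more invoke the vanishing of $(\phi f)^{(i)}$ at the endpoints for $i\le p-1$, iterating on $\ell$ until the surviving term has order $\Delta t^{p+1}$.
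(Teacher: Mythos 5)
Your proof is correct and follows essentially the same route as the paper's: observe that $g=(\phi f)'$ so that $\int_a^b g\,dt=\phi(b)f(b)-\phi(a)f(a)=0$, invoke Euler--Maclaurin, and use the multiplicity-$p$ roots of $\phi$ at $a$ and $b$ to annihilate the low-order boundary corrections. The one place your bookkeeping genuinely departs from the paper's is the even-$p$ case, and there your count is the correct one. The boundary term at order $\Delta t^{2k}$ is $g^{(2k-1)}(c)=(\phi f)^{(2k)}(c)$, which by the Leibniz rule vanishes exactly when $2k\le p-1$; the paper's stronger claim that $g^{(k)}(a)=g^{(k)}(b)=0$ for all $0\le k\le p-1$ is off by one, since already $g^{(p-1)}(c)=(\phi f)^{(p)}(c)=\phi^{(p)}(c)f(c)\neq 0$ in general. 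Hence for even $p$ the first surviving correction sits at $2k=p$ and the error is generically $\mathcal{O}(\Delta t^{p})$ as you state, not the $\mathcal{O}(\Delta t^{p+2})$ asserted at the end of the paper's proof: for example $\phi(t)=t^{2}(1-t)^{2}$ and $f(t)=t$ on $[0,1]$ (so $p=2$) give a trapezoidal sum equal to $\tfrac{1}{6}\Delta t^{2}+\mathcal{O}(\Delta t^{4})$. So the stated $\mathcal{O}(\Delta t^{p+1})$ rate is established (by you and by the paper) only for odd $p$, and your honest downgrade to $\mathcal{O}(\Delta t^{p})$ for even $p$ is what actually holds. Your caveat that the Euler--Maclaurin remainder requires more regularity than the bare $C^{p}$ hypothesis is also legitimate; the paper sidesteps it by treating the expansion as an asymptotic series for smooth $g$, which is harmless for the polynomial family $\mathcal{S}$ (whose members are polynomials on the closed interval $[a,b]$) but deserves the explicit remark you give it.
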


\begin{proof}
This is a simple consequence of the Euler-Maclaurin formula. If $g:[a,b]\to\mathbb{C}$
is a smooth function, then 
\[
\frac{\Delta t}{2}\sum_{j=0}^{N-1}\left[g(t_{j})+g(t_{j+1})\right]\sim\int_{a}^{b}g(t)\,dt+\sum_{k=1}^{\infty}\,\frac{\Delta t^{2k}B_{2k}}{(2k)!}\left(g^{(2k-1)}(b)-g^{(2k-1)}(a)\right),
\]
where $B_{2k}$ are the Bernoulli numbers. The asymptotic expansion
provides corrections to the trapezoidal rule that realize machine
precision accuracy up until a certain value of $k$, after which terms
in the expansion grow and the series diverges \cite[Ch. ~3]{dahlquist2008numerical}.
In our case, $g(t)=\phi'(t)f(t)+\phi(t)f'(t)$ where the root conditions
on $\phi$ imply that 
\[
\int_{a}^{b}g(t)\,dt=0\quad\text{and}\quad g^{(k)}(b)=g^{(k)}(a)=0,\quad0\leq k\leq p-1.
\]
So for $p$ odd, we have that 
\begin{align*}
\frac{\Delta t}{2}\sum_{j=0}^{N-1}\left[g(t_{j})+g(t_{j+1})\right] & \sim\sum_{k=(p+1)/2}^{\infty}\,\frac{\Delta t^{2k}B_{2k}}{(2k)!}\left(g^{(2k-1)}(b)-g^{(2k-1)}(a)\right)\\
 & =\frac{B_{p+1}}{(p+1)!}(\phi^{(p)}(b)f(b)-\phi^{(p)}(a)f(a))\Delta t^{p+1}+\mathcal{O}\left(\Delta t^{p+2}\right).
\end{align*}
For even $p$, the leading term is $\mathcal{O}(\Delta t^{p+2})$
with a slightly different coefficient. 
\end{proof}
For $\phi\in\mathcal{S}$ with $p=q$, the exact leading order error
in term in \eqref{traplemmscale} is 
\[
\frac{2^{p}\,B_{p+1}}{p+1}\,\big(f(b)-f(a)\big)\Delta t^{p+1},
\]
which is negligible for a wide range of reasonable $p$ and $\Delta t$
values. The Bernoulli numbers eventually start growing like $p^{p}$,
but for smaller values of $p$ they are moderate. For instance, with
$\Delta t=0.1$ and $f(b)-f(a)=1$, this error term is $o(1)$ up
until $p=85$, where it takes the value $0.495352$, while for $\Delta t=0.01$,
the error is below machine precision for all $p$ between 7 and 819.
For these reasons, in what follows we choose test functions $(\phi_{k})_{k\in[K]}\subset\mathcal{S}$
and discretize all integrals using the trapezoidal rule. Unless otherwise
stated, each function $\phi_{k}$ satsifies $p=q$ and so is fully
determined by the tuple $\{p_{k},a_{k},b_{k}\}$ indicating its polynomial
degree and support. In the next two subsections we propose two different
strategies for determining $\phi_{k}$ using the data $\mathbf{y}$.

\subsubsection{\textbf{\label{subsec:Strategy-1:-Uniform}Strategy 1: Uniform Grid}}

The simplest strategy for choosing a test function basis $(\phi_{k})_{k\in[K]}\subset\mathcal{S}$
is to place $\phi_{k}$ uniformly on the interval $[0,T]$ with fixed
degree $p$ and support size $L$ denoting the number of timepoints
in $\mathbf{t}$ that each $\phi_{k}$ is supported on. Specifically
we propose the following three steps to automate this process, requiring
only that the user specify the polynomial degree $p$ and the total
number of basis functions $K$, through the values of two hyperparameters
$\rho$ and $s$ that relate to the residual analysis above.

\noindent \textit{Step 1: Choosing $L$:} We propose to fix the support
size of each $\phi_{k}$ to be 
\begin{equation}
L=\frac{1}{2}\left(\frac{M}{\text{argmax}_{n}\left\vert \mathbf{Y}\right\vert _{n}}\right)\label{UG}
\end{equation}
where $\left\vert \mathbf{Y}\right\vert _{n}$ is the magnitude of
the $n$th Fourier mode of $\mathbf{y}$ minus its mean. In this way
the action of the test functions exploits that large changes in the
dynamics are most likely to occur within time intervals of length
equal to the largest Fourier mode.\\

\noindent \textit{Step 2: Determining $p$:} In light of the derivation
above of the approximate covariance matrix $\Sigma$, we define $\rho:=\left\Vert \phi'_{k}\right\Vert _{\infty}/\left\Vert \phi_{k}\right\Vert _{\infty}$,
where larger $\rho$ indicates better agreement with $\Sigma$. The
value of $\rho$ selects the polynomial degree $p$ as follows: analytically,
\[
\rho=\frac{2\sqrt{2p-1}}{b-a}\left(\frac{1-1/p}{1-1/2p}\right)^{p-1}=\frac{1.67\dots}{b-a}\left(p^{1/2}+p^{-1/2}\right)+o(p^{-1/2}).
\]
As a leading order approximate we set $p=\lceil\rho^{2}\frac{|b-a|^{2}}{2.8}\rceil$.

\noindent \textit{Step 3: Determining $K$:} Next we introduce the
shift parameter $s\in[0,1]$ defined by 
\[
s=\phi_{k}(t^{*})=\phi_{k+1}(t^{*}),
\]
which determines $K$ from $p$ and $L$. In words, $s$ is the height
of intersection between $\phi_{k}$ and $\phi_{k+1}$ and measures
the amount of overlap between successive test functions, which factors
into the covariance matrix $\Sigma$. This fixes $a_{k+1}-a_{k}=\Delta tL\sqrt{1-s^{1/p}}$
for each pair of neighboring test functions. Larger $s$ implies that
neighboring basis functions overlap on more points, with $s=1$ indicating
that $\phi_{k}=\phi_{k+1}$. Specifically, neighboring basis functions
overlap on $\lfloor L(1-\sqrt{1-s^{1/p}})\rfloor$ timepoints.

In Figures \ref{lownz_duff} and \ref{lownz_vp} we vary the parameters
$\rho$ and $s$ and observe that results agree with intuition: larger
$\rho$ and larger $s$ lead to better recovery of $\mathbf{w}^{\star}$.

\noindent $\widehat{\mathbf{w}}=\textbf{WSINDy\_UG}\left(\mathbf{y},\mathbf{t}\ ;\ (f_{j})_{j\in[J]},\rho,s,\lambda,\gamma\right)$: 
\begin{enumerate}
\item Construct matrix of trial gridfunctions $\Theta(\mathbf{y})=\big[f_{1}(\mathbf{t},\mathbf{y})\,|\,\dots\,|\,f_{J}(\mathbf{t},\mathbf{y})\big]$ 
\item Construct integration matrices $\mathbf{V}$, $\mathbf{V}'$ such
that 
\[
\mathbf{V}_{km}=\Delta t\phi_{k}(t_{m}),\quad\mathbf{V}'_{km}=\Delta t\phi_{k}'(t_{m})
\]
with the test functions $(\phi_{k})_{k\in[K]}$ determined by $\rho,s$
as described above 
\item Compute Gram matrix $\mathbf{G}=\mathbf{V}\Theta(\mathbf{y})$ and
right-hand side $\mathbf{b}=-\mathbf{V}'\mathbf{y}$ so that $\mathbf{G}_{kj}=~\langle\phi_{k},f_{j}(\mathbf{y})\rangle$
and $\mathbf{b}_{kd}=-\langle\phi_{k}',\mathbf{y}_{d}\rangle$ 
\item Compute approximate covariance and Cholesky factorization $\Sigma=\mathbf{V}'(\mathbf{V}')^{T}=\textbf{C}\textbf{C}^{T}$ 
\item Solve the generalized least-squares problem with $\ell_{2}$-regularization
\[
\widehat{\mathbf{w}}=\text{argmin}_{\mathbf{w}}\left\{ (\mathbf{G}\mathbf{w}-\mathbf{b})^{T}\Sigma^{-1}(\mathbf{G}\mathbf{w}-\mathbf{b})+\gamma^{2}\left\Vert \mathbf{w}\right\Vert _{2}^{2}\right\} ,
\]
using sequential thresholding with parameter $\lambda$ to enforce
sparsity. 
\end{enumerate}

\subsubsection{\textbf{\label{subsec:Strategy-2:-Adaptive}Strategy 2: Adaptive
Grid}}

Motivated by the arguments above, we now introduce an algorithm for
constructing a test function basis localized near points of large
change in the dynamics. This occurs in three steps: 1) construct a
weak approximation to the derivative of the dynamics $\mathbf{v}\approx\dot{\mathbf{x}}$,
2) sample $K$ points $\textbf{c}$ from a cumulative distribution
$\psi$ with density proportional to the total variation $|\mathbf{v}|$,
3) construct test functions centered at $\textbf{c}$ using a width-at-half-max
parameter $r_{whm}$ to determine the parameters $(p_{k},a_{k},b_{k})$
of each basis function $\phi_{k}$. Each of these steps is numerically
stable and carried out independently along each coordinate of the
dynamics. A visual diagram is provided in Figure \ref{adapt}.\\

\begin{figure}
\begin{tabular}{cc}
\includegraphics[clip,width=0.45\textwidth]{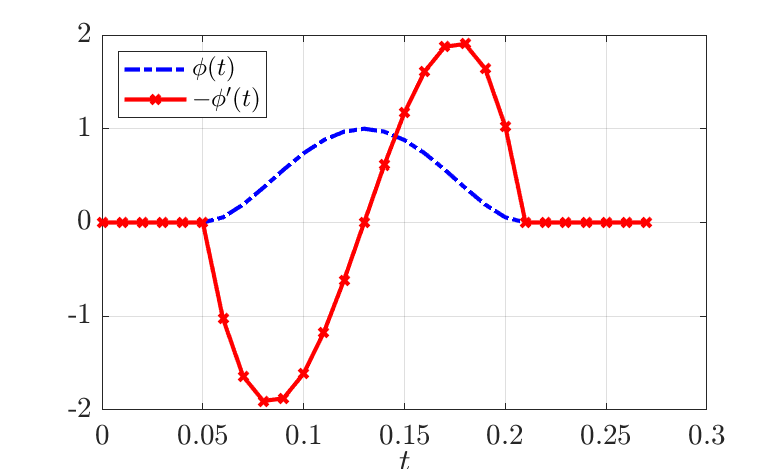}  & \includegraphics[clip,width=0.45\textwidth]{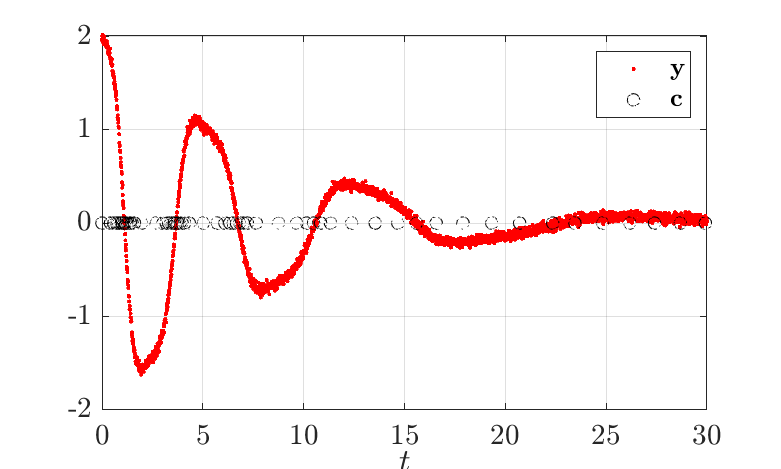} \tabularnewline
\includegraphics[clip,width=0.45\textwidth]{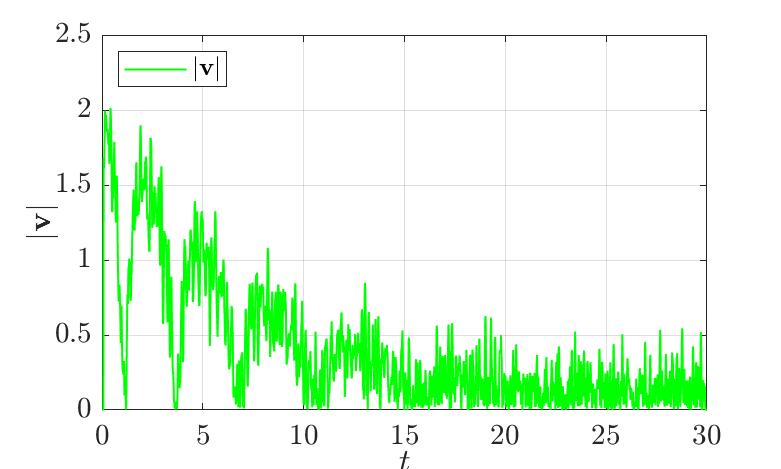}  & \includegraphics[clip,width=0.45\textwidth]{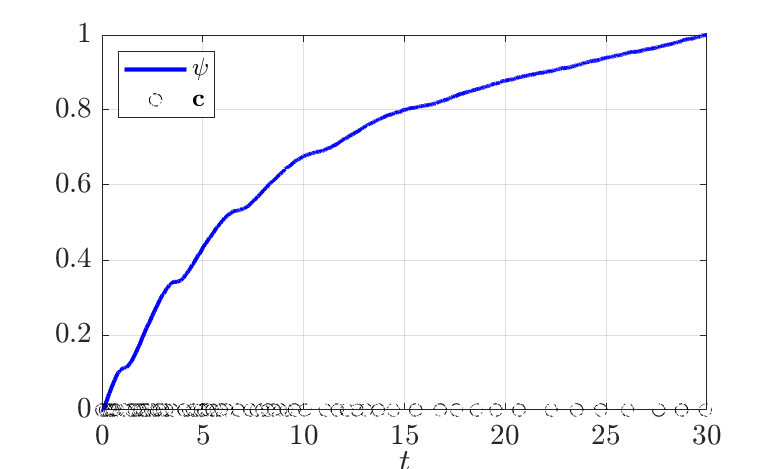} \tabularnewline
\end{tabular}\caption{Counter-clockwise from top left: test function $\phi$ and derivative
$-\phi'$ used to compute $\mathbf{v}$, approximate total variation
$|\mathbf{v}|$, cumulative distribution $\psi=\int^{t}|\mathbf{v}|\,dt$,
noisy data $\mathbf{y}$ from the Duffing equation and resulting test
functons centers $\mathbf{c}$. As desired, the centers $\mathbf{c}$
are clustered near steep gradients in the dynamics despite measurement
noise.}
\label{adapt} 
\end{figure}

\noindent \textit{Step 1: Weak Derivative Approximation:} Define $\mathbf{v}:=-\textbf{V}_{p}'\,\mathbf{y}$,
where the matrix $-\textbf{V}_{p}'$ enacts a linear convolution with
the derivative of a piece-wise polynomial test function $\phi\in\mathcal{S}$
of degree $p$ and support size $s$ so that 
\[
\mathbf{v}_{m}=-\left\langle \phi',\mathbf{y}\right\rangle =\left\langle \phi,\dot{\mathbf{y}}\right\rangle \approx\dot{\mathbf{y}}_{m}.
\]
The parameters $s$ and $p$ are chosen by the user, with $s=4$ and
$p\geq2$ corresponding to taking a centered finite difference derivative
with 3-point stencil. Larger $s$ results in more smoothing and minimizes
the corruption from noise while still capturing the correct large
deviations in the dynamics. For all examples we let $p=2$ and $s=16$
and note that greater disparity between $p$ and $s$ results in more
pronounced localization (less uniform distribution) of test functions.
\\

\noindent \textit{Step 2: Selecting $\textbf{c}$:} Having computed
$\mathbf{v}$, define $\psi$ to be the cumulative sum of $|\mathbf{v}|$
normalized so that $\max{\psi}=1$. In this way $\psi$ is a valid
cumulative distribution function with density proportional to the
total variation of $\mathbf{y}$. We then find $\textbf{c}$ by sampling
form $\psi$. Let $U=[0,\frac{1}{K},\frac{2}{K},\dots,\frac{K-1}{K}]$
with $K$ being the number of the test functions, we then define $\textbf{c}=\psi^{-1}(U)$,
or numerically, 
\[
c_{k}=\min\{t\in\mathbf{t}\ :\ \psi(t)\geq U_{k}\}.
\]
This stage requires the user to select the number of test functions
$K$.\\

\noindent \textit{Step 3: Construction of Test functions $(\phi_{k})_{k\in[K]}$:}
Having chosen the location $c_{k}$ of the centerpoint for each test
function $\phi_{k}$, we are left to choose the degree $p_{k}$ of
the polynomial and the supports $[a_{k},\,b_{k}]$. The degree is
chosen according to the width-at-half-max parameter $r_{whm}$, which
specifies the difference in timepoints between each center $c_{k}$
and $\text{arg}_{t}\{\phi_{k}(t)=1/2\}$, while the supports are chosen
such that $\phi_{k}(b_{k}-\Delta t)=10^{-16}$. This gives us a nonlinear
system of two equations in two unknowns which can be easily solved
(i.e. using MATLAB's \texttt{fzero}). This can be done for one reference
test functions and the rest of the weights obtained by translation.\\

The adaptive grid Weak SINDy algorithm is summarized as follows:\\

\noindent $\widehat{\mathbf{w}}=\textbf{WSINDy\_AG}\left(\mathbf{y},\mathbf{t}\ ;\ (f_{j})_{j\in[J]},p,s,K,r_{whm},\lambda,\gamma\right)$: 
\begin{enumerate}
\item Construct matrix of trial gridfunctions $\Theta(\mathbf{y})=\big[f_{1}(\mathbf{t},\mathbf{y})\,|\,\dots\,|\,f_{J}(\mathbf{t},\mathbf{y})\big]$ 
\item Construct integration matrices $\mathbf{V}$, $\mathbf{V}'$ such
that 
\[
\mathbf{V}_{km}=\Delta t\phi_{k}(t_{m}),\quad\mathbf{V}'_{km}=\Delta t\phi_{k}'(t_{m}),
\]
with test functions $(\phi_{k})_{k\in[K]}$ determined by $p,s,K,r_{whm}$
as described above 
\item Compute Gram matrix $\mathbf{G}=\mathbf{V}\Theta(\mathbf{y})$ and
right-hand side $\mathbf{b}=-\mathbf{V}'\mathbf{y}$ so that $\mathbf{G}_{kj}=~\langle\phi_{k},f_{j}(\mathbf{y})\rangle$
and $\mathbf{b}_{kd}=-\langle\phi_{k}',\mathbf{y}_{d}\rangle$ 
\item Compute approximate covariance and Cholesky factorization $\Sigma=\mathbf{V}'(\mathbf{V}')^{T}=\textbf{C}\textbf{C}^{T}$ 
\item Solve the generalized least-squares problem with $\ell_{2}$-regularization
\[
\widehat{\mathbf{w}}=\text{argmin}_{\mathbf{w}}\left\{ (\mathbf{G}\mathbf{w}-\mathbf{b})^{T}\Sigma^{-1}(\mathbf{G}\mathbf{w}-\mathbf{b})+\gamma^{2}\left\Vert \mathbf{w}\right\Vert _{2}^{2}\right\} ,
\]
using sequential thresholding with parameter $\lambda$ to enforce
sparsity. 
\end{enumerate}
The parameters $p$ and $s$ play a role in determining how localized
the test function basis is around steep gradients and ultimately depend
on the timestep $\Delta t$. As mentioned above, we set $p=2$ and
$s=16$ throughout as this produces sufficient localization for the
examples featured in this article. For simplicity we fix the number
of test functions $K$ to be a multiple of the number of trial functions
(i.e. $K=J,\,2J,\,3J$ etc.). For larger noise it is necessary to
use a larger basis, while for small noise it as often sufficient to
set $K=J$. The optimal value of $r_{whm}$ depends on the timescales
of the dynamics and can be chosen from the data using the Fourier
transform as in the uniform grid case, however for simplicity we set
$r_{whm}=30$ throughout.

\section{Numerical Experiments}

We now show that WSINDy is capable of recovering the correct dynamics
to high accuracy over a range of signal-to-noise ratios. To generate
true trajectory data we use MATLAB's \texttt{ode45} with absolute
and relative tolerance $1$e$-10$ and sampling rate of $\Delta t=0.01,$
unless otherwise specified. We choose a fixed sampling rate $\Delta t$
for simplicity and uniformity across examples but note that a detailed
study of the dependency of the algorithm on $\Delta t$ is not presented
here. While the results in this section hold for a wide range of $\Delta t$
(results not presented here), different sampling rates may result
in different choices of hyperparameters (e.g., $\rho$ and $s$ in
the case of WSINDy\_UG). White Gaussian noise with mean zero and variance
$\sigma^{2}$ is then added to the exact trajectories, where $\sigma$
is computed by specifying the signal-to-noise ratio $\sigma_{SNR}$
and setting 
\[
\sigma=\sigma_{SNR}\left\Vert \mathbf{x}\right\Vert _{RMS}\quad\text{where}\quad\left\Vert \mathbf{x}\right\Vert _{RMS}=\sqrt{\frac{1}{DM}\sum_{d=1}^{D}\sum_{m=1}^{M}|\mathbf{x}_{d}(t_{m})|^{2}}.
\]
In this way, $\left\Vert \epsilon\right\Vert _{RMS}/\left\Vert \mathbf{x}\right\Vert _{RMS}\approx\sigma$.
We examine the following canonical nonlinear systems with variations
in the specified parameters:\\
\vspace{-30pt}

\begin{center}
\begin{align*}
\text{Duffing}\quad & \begin{cases}
\dot{x}_{1}=x_{2},\\
\dot{x}_{2}=-\mu x_{2}-\alpha x_{1}-\beta x_{1}^{3},
\end{cases} &  & \text{variable }\beta, &  & \text{fixed }\mu=0.2,\alpha=0.05,\mathbf{x}(0)=\begin{bmatrix}0\\
2
\end{bmatrix}\\
\text{Van der Pol}\quad & \begin{cases}
\dot{x}_{1}=x_{2},\\
\dot{x}_{2}=\beta x_{2}(1-x_{1}^{2})-x_{1},
\end{cases} &  & \text{variable }\beta, &  & \text{fixed }\mathbf{x}(0)=\begin{bmatrix}0\\
1
\end{bmatrix}\\
\text{Lotka-Volterra}\quad & \begin{cases}
\dot{x}_{1}=\alpha x_{1}-\beta x_{1}x_{2},\\
\dot{x}_{2}=\beta x_{1}x_{2}-2\alpha x_{2},
\end{cases} &  & \text{variable }\beta, &  & \text{fixed }\alpha=1,\mathbf{x}(0)=\begin{bmatrix}1\\
2
\end{bmatrix}\\
\text{Lorenz}\quad & \begin{cases}
\dot{x}_{1}=\sigma(x_{2}-x_{1}),\\
\dot{x}_{2}=x_{1}(\rho-x_{3})-x_{2},\\
\dot{x}_{3}=x_{1}x_{2}-\beta x_{3},
\end{cases} &  & \text{variable }\mathbf{x}(0), &  & \text{fixed }\sigma=10,\beta=8/3,\rho=28
\end{align*}
\par\end{center}

The Duffing equation and Van der Pol oscillator present cases of approximately
linear systems with cubic nonlinearities. Solutions to the Van der
Pol oscillator and Lotka-Volterra system exhibit orbits with variable
speed of motion, in particular regions with rapid change between regions
of very litte variation. For the Lorenz system, we focus on recovering
the system in the chaotic regime. For this reason we fix the parameters
of the differential equation to lie in the region with large Lyupanov
exponents and vary the initial conditions. The initial conditions
are chosen from a uniform distribution, $x_{1},x_{2}\sim U_{[-15,15]}$
and $x_{3}\sim U_{[10,40]}$, which covers the strange attractor.

\subsection{Noise-Free Data}

The goal of the noise-free experiments is to examine the effect of
increasing the polynomial degree $p$ of the test functions to show
that convergence to machine precision is realized in the limit of
large $p$ (i.e. convergence to within the accuracy tolerance of the
ODE solver), as expected from Lemma \ref{traplemm}. Indeed, Figure
\ref{oz_fig} shows that in the zero-noise case ($\epsilon=0$), WSINDy
recovers the correct weight matrix $\mathbf{w}^{\star}$ to within
the tolerance of the ODE solver (fixed at $10^{-10}$) over a wide
range of parameter values for each of the dynamical systems above.
We find that accurate recovery occurs regardless of sparsity enforcement
or regularization, and so we set $\lambda=0.001$ throughout, orders
of magnitude below any of the true coefficients $\mathbf{w}^{\star}$,
and $\gamma=0$. For the data-driven trial basis $(f_{j})_{j\in[J]}$,
we include all polynomials up to degree 5 in the state variables as
well as $\cos(n\mathbf{y}_{d})$, $\sin(n\mathbf{y}_{d})$ for $n=1,2$
and $d\in[D]$. In addition, we find that recovery occurs with the
minimal number of basis functions $K=J$ such that the Gram matrix
$\mathbf{G}=\mathbf{V}\Theta(\mathbf{y})$ is square. We use the uniform
grid approach above with support $L$ selected from the Fourier transform
of $\mathbf{y}$ and shift parameter $s$ fixed to ensure that $K=J$. 

\begin{figure}
\begin{tabular}{cc}
\includegraphics[clip,width=0.5\textwidth]{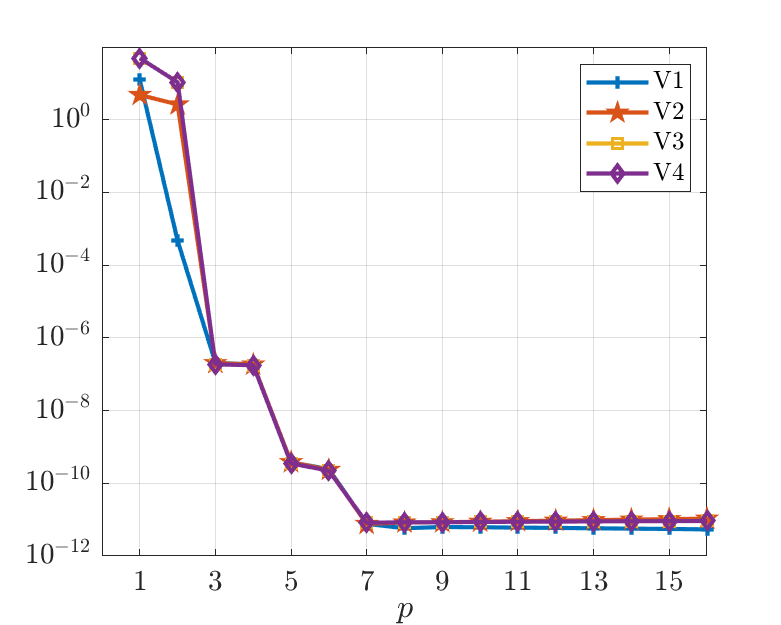}  & \includegraphics[clip,width=0.5\textwidth]{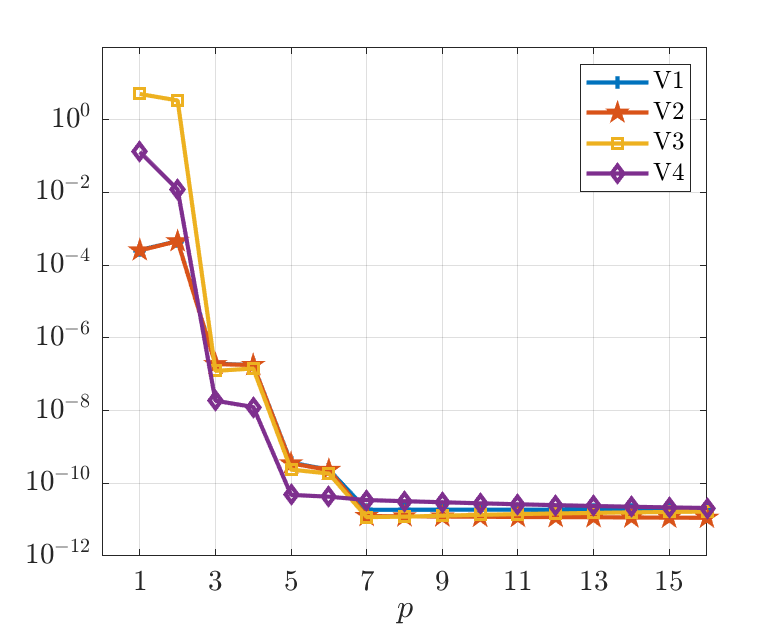} \tabularnewline
Duffing  & Van der Pol\tabularnewline
\includegraphics[clip,width=0.5\textwidth]{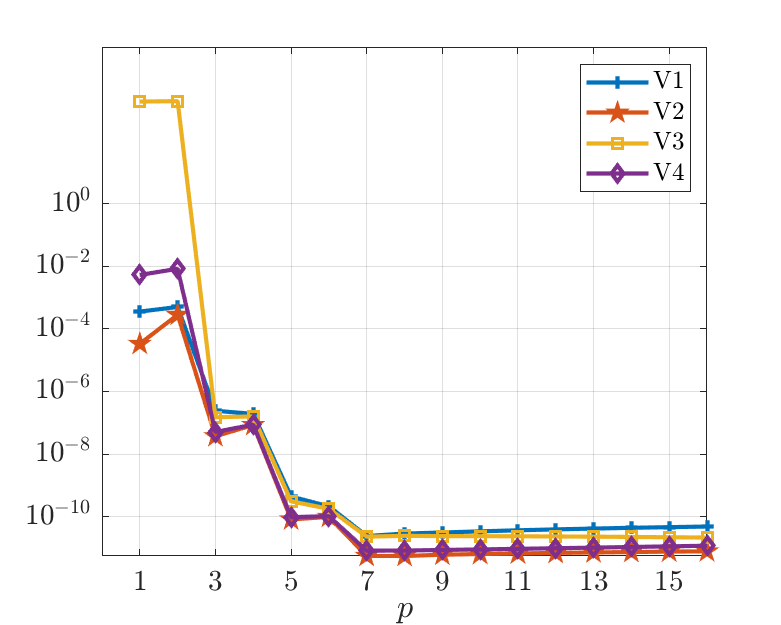}  & \includegraphics[clip,width=0.5\textwidth]{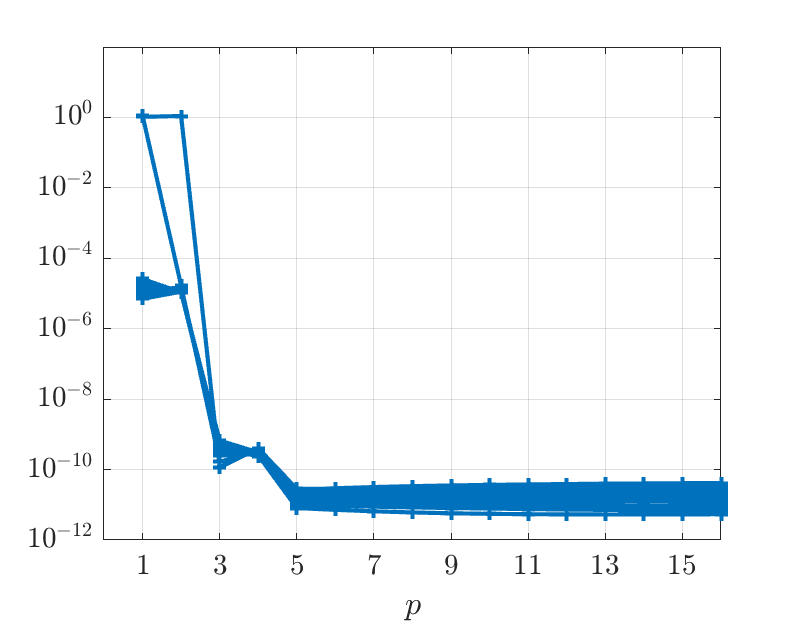} \tabularnewline
Lotka-Volterra  & Lorenz\tabularnewline
 & \tabularnewline
\end{tabular}

~\caption{Noise-free data: plots of relative error $\left\Vert \mathbf{\widehat{w}-\mathbf{w}^{\star}}\right\Vert _{2}/\left\Vert \mathbf{w}^{\star}\right\Vert _{2}$
vs. $p$ when $\epsilon=0$ using WSINDy\_UG. For each system, a range
of parameter values is considered (see Table.\ref{tab:Specifications-for-parameters}).
In each case, the recovered coefficients $\widehat{\mathbf{w}}$ rapidly
converge to within the accuracy of the ODE solver ($10^{-10}$) as
$p$ is increased. For the Duffing equation, Van der Pol oscillator
and Lotka-Volterra system, convergence is shown for parameter sets
V1-V4 spanning several orders of magnitude. For the Lorenz system,
convergence is shown for 40 trajectories, each generated with initial
conditions drawn randomly from a uniform distribution which covers
the strange attractor. }
\label{oz_fig}
\end{figure}
\begin{table}
\begin{centering}
\begin{tabular}{c|ccccc}
 & V1  & V2  & V3  & V4  & Notes \tabularnewline
\hline 
Duffing, $\beta:$ & $0.005$  & $0.08$  & $1$  & $100$  & $\mathbf{t}=0:0.01:30$ \tabularnewline
Van der Pol, $\beta:$ & $0.01$  & $0.1$  & $1$  & $10$  & $\mathbf{t}=0:0.01:30$ \tabularnewline
Lotka-Volterra, $\beta:$  & $0.05$  & $0.1$  & $1$  & $10$  & $\mathbf{t}=0:0.01:30$ \tabularnewline
Lorenz  & -  & -  & -  & -  & $\mathbf{x}(0)\sim U_{[-15,15]^{2}\times[10,40]}$, \tabularnewline
 &  &  &  &  & $\mathbf{t}=(0.001:0.001:10)$\tabularnewline
\end{tabular}
\par\end{centering}
\caption{\label{tab:Specifications-for-parameters}Specifications for parameters
used in illustrating simulations in Figure \ref{oz_fig}.}
\end{table}

\subsection{Small-Noise Regime }

We now turn to the case of low to moderate noise levels, examining
a signal-to-noise ratio $\sigma_{SNR}$ in the range $[10^{-5},0.04]$.
In Figures \ref{lownz_duff} and \ref{lownz_vp} we observe another
nice property of WSINDy, that the error in the coefficients scales
with $\sigma_{SNR}$, in that the recovered coefficients $\widehat{\mathbf{w}}$
have approximately $\log_{10}(10\sigma_{SNR}^{-1})$ significant digits.\\

We again use the uniform grid approach. We examine not only the polynomial
degree $p$ but the number of basis functions $K$ used in recovery.
To reiterate the arguments above, the magnitude of $\phi_{k}'$ compared
to $\phi_{k}$ affects the distribution of the residual, so we define
$\rho:=\left\Vert \phi'_{k}\right\Vert _{\infty}/\left\Vert \phi{}_{k}\right\Vert _{\infty}$
and define the degree $p$ by fixing $\rho$ and then calculating
$p$. In this way, increasing $\rho$ corresponds to increasing $p$.
We look at $\rho\in[1,5]$ which corresponds roughly to $p\in[4,100]$.
This together with the spacing parameter $s$ determines the test
function basis. We enforce that two neighboring basis functions $\phi_{k}$
and $\phi_{k+1}$ intersect at a height of $s$, so that with $s=1$,
the two functions perfectly overlap, and with $s=0$ their supports
are disjoint. In this way, larger $s$ corresponds to higher $K$.
We examine $s\in(0,1)$. For $s=0.5$ (featured in both Figures) we
note that as $\rho$ is varied from $1$ to $5$, the number of basis
functions $K$ ranges from $21$ to $105$, or $K=J$ to $K=5J$.
In each case we set the sparsity and regularization parameters to
$\lambda=\frac{1}{4}\min_{\mathbf{w}^{\star}\neq0}|\mathbf{w}^{\star}|$
and $\gamma=0$ and use a trial basis $(f_{j})_{j\in[J]}$ consisting
of all polynomials up to degree 5 in the state variables.\\

We simulated 200 instantiations of noise for the Duffing equation
and Van der Pol oscillator and for each noisy trajectory examined
a range of the parameter values $s$ and $\rho$. As one might expect
form the noise-free case above, increasing $\rho$ leads monotonically
to better recovery. In addition, increasing $s$ also leads to better
recovery. The mean and standard deviation of the coefficient error
$\left\Vert \mathbf{\widehat{w}-\mathbf{w}^{\star}}\right\Vert _{2}/\left\Vert \mathbf{w}^{\star}\right\Vert _{2}$
are also pictured, along with sample trajectories from the resulting
data-driven dynamical systems, denoted by $\mathbf{x}_{dd}$.

\begin{figure}
\begin{tabular}{c}
\begin{tabular}{cc}
\includegraphics[clip,width=0.45\textwidth]{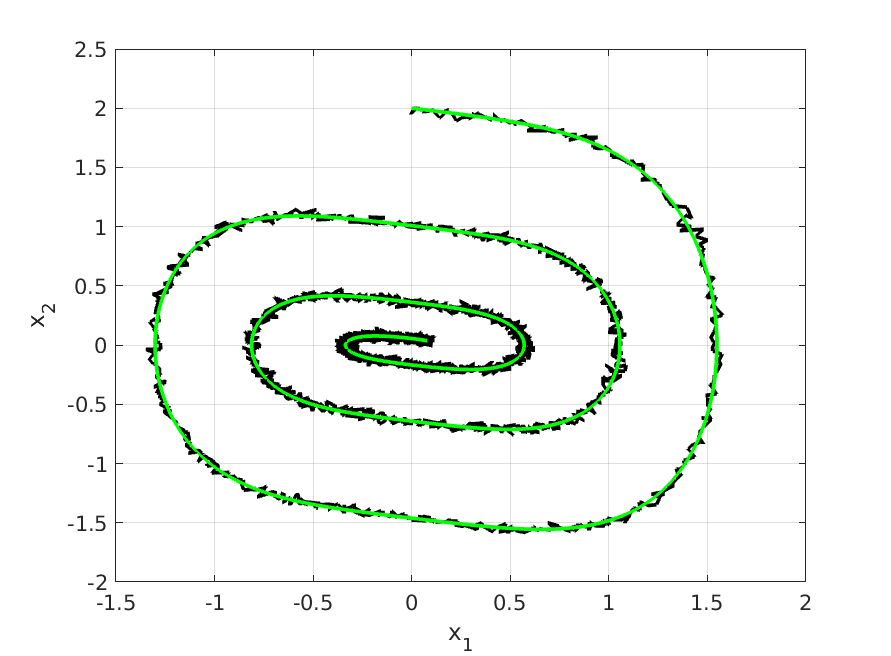}  & \includegraphics[clip,width=0.45\textwidth]{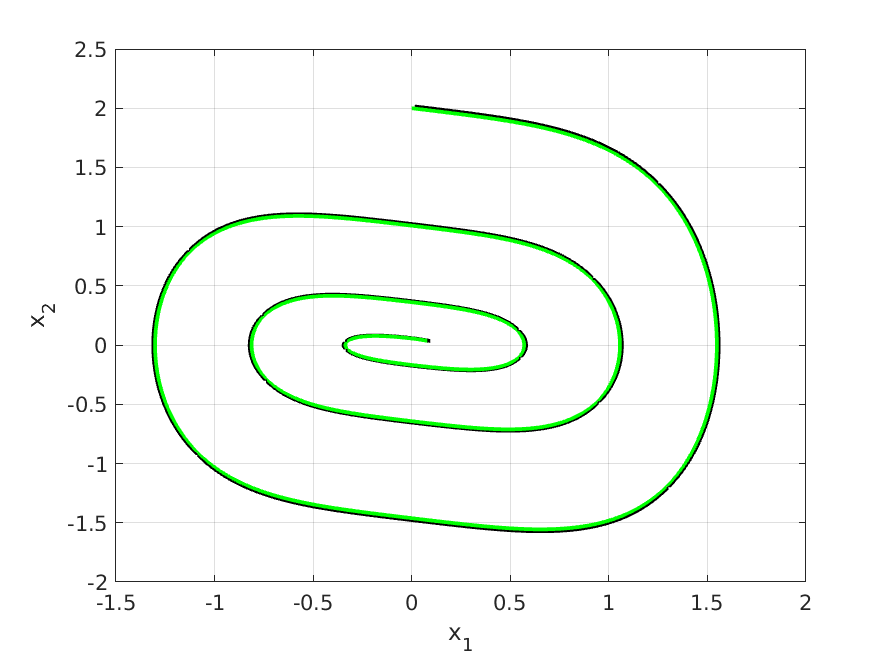} \tabularnewline
\end{tabular}\tabularnewline
\includegraphics[clip,width=1\textwidth]{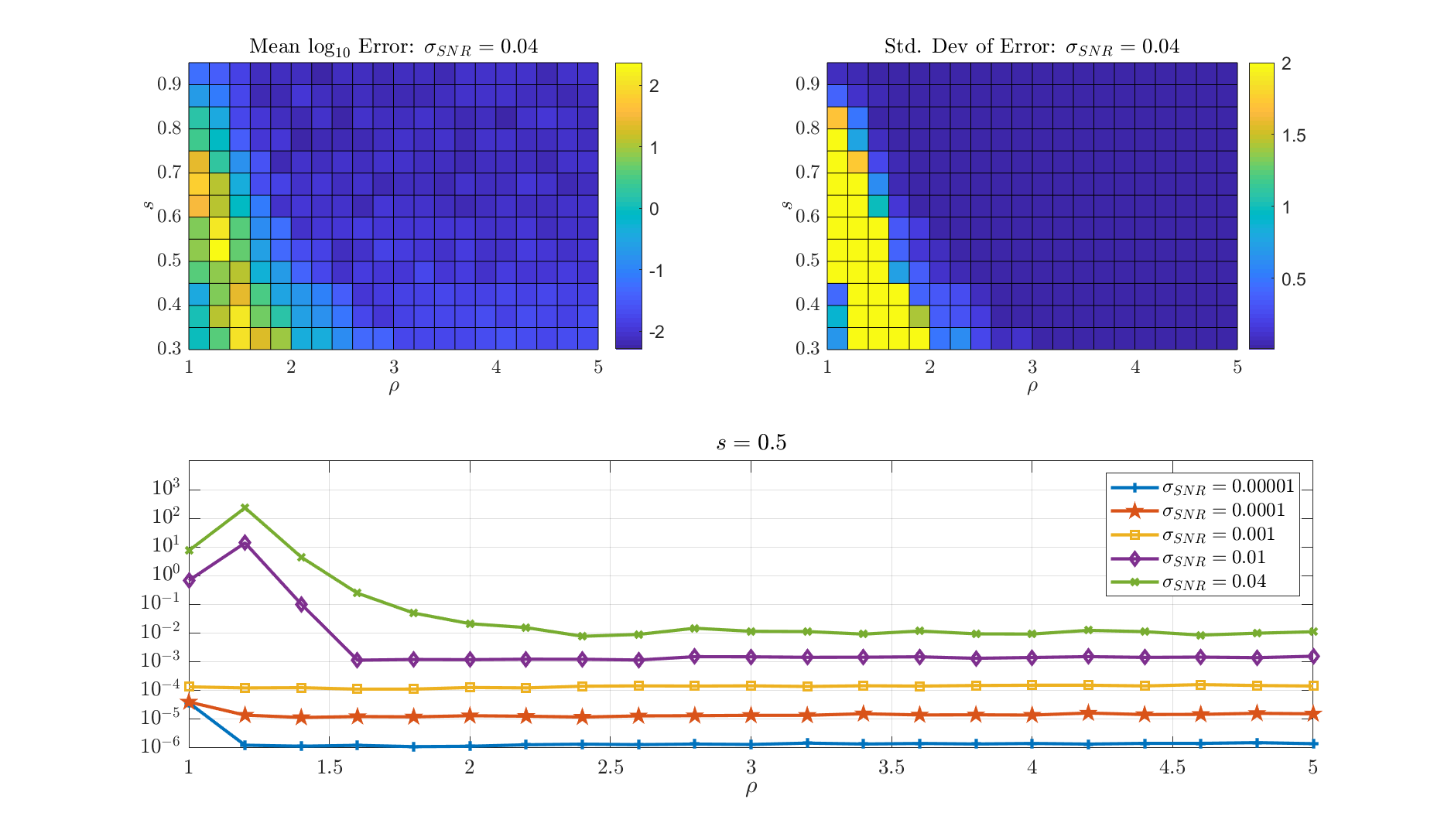} \tabularnewline
\end{tabular}\caption{Low-noise regime: dynamic recovery of the Duffing equation with parameters
$\mu=0.2,\alpha=0.05,\beta=1$. Top row: example trajectory $\mathbf{y}$
(left) and learned dynamics $\mathbf{x}_{dd}$ (right) both plotted
over true data $\mathbf{x}$. Here, $\sigma_{SNR}=0.04$, $\rho=5$,
$s=0.5$ and the coefficient error is $E:=\left\Vert \mathbf{\widehat{w}-\mathbf{w}^{\star}}\right\Vert _{2}/\left\Vert \mathbf{w}^{\star}\right\Vert _{2}=0.0009$.
Middle row: heat map of the $\log_{10}$ average error $E$ (left)
and standard deviation (right) over 200 noisy trajectories with $\sigma_{SNR}=0.04$
with increasing $\rho$ along the $x$-axis and increasing $s$ along
the $y$-axis. Bottom: decreasing error trend for fixed $s=0.5$ for
various $\sigma_{SNR}$. For each $\sigma_{SNR}$ the expected error
falls roughly an order of magnitude below the $\sigma_{SNR}$ as $\rho$
increases.}
\label{lownz_duff} 
\end{figure}

\begin{figure}
\begin{tabular}{c}
\begin{tabular}{cc}
\includegraphics[clip,width=0.45\textwidth]{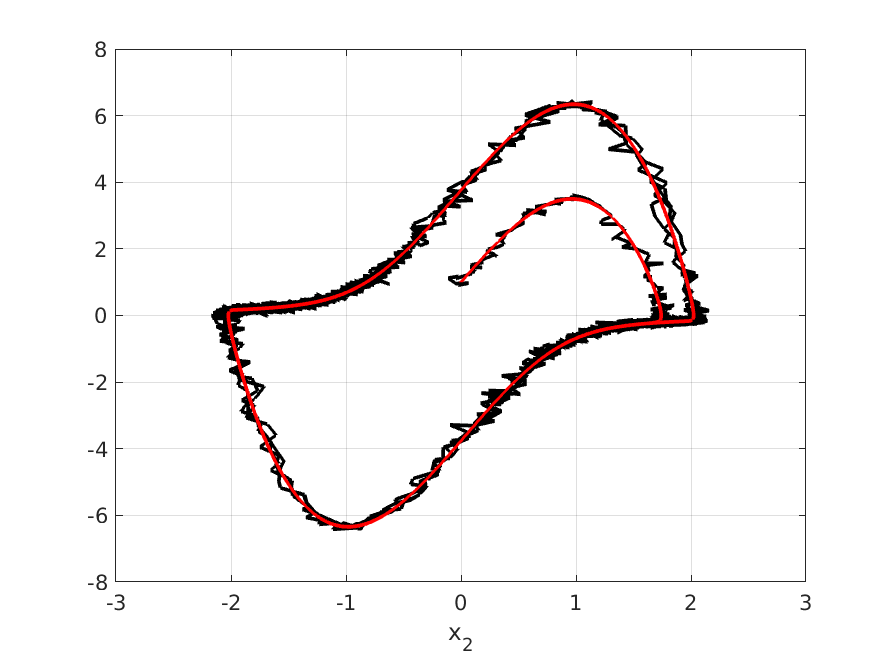}  & \includegraphics[clip,width=0.45\textwidth]{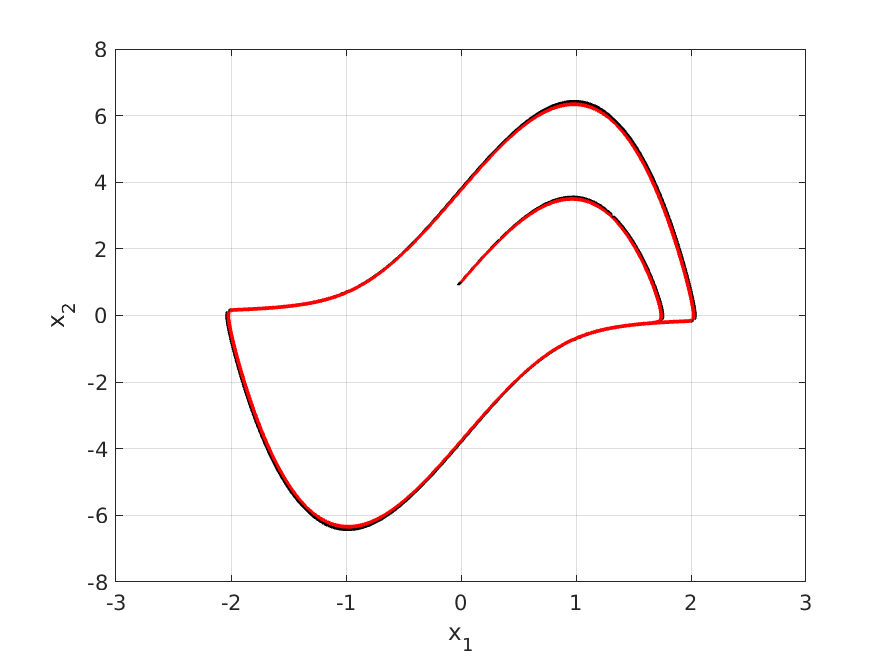} \tabularnewline
\end{tabular}\tabularnewline
\includegraphics[clip,width=1\textwidth]{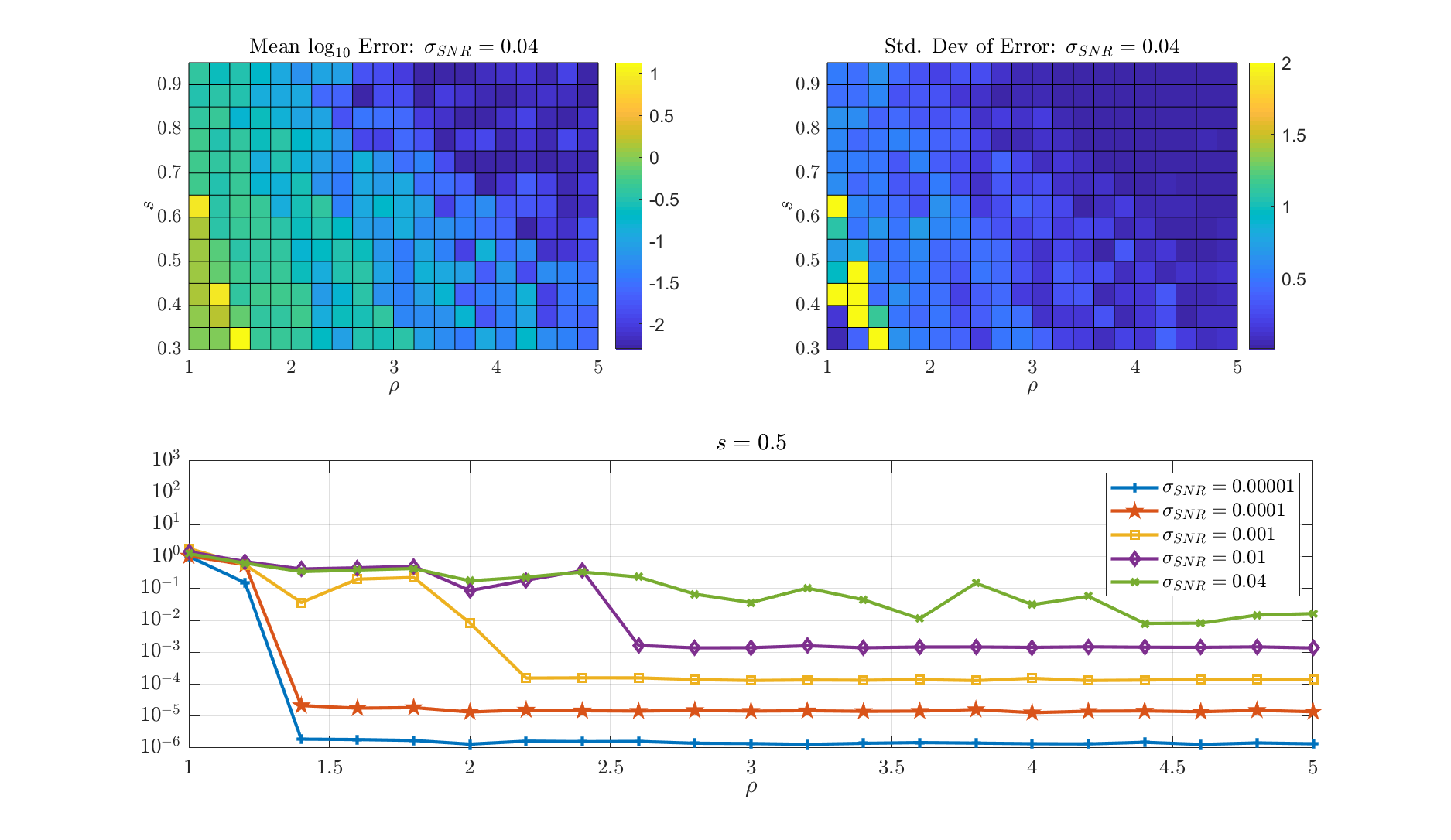} \tabularnewline
\end{tabular}\caption{Low-noise regime: dynamic recovery of the Van der Pol oscillator with
parameter $\mu=4$. Top row: example trajectory $\mathbf{y}$ (left)
and learned dynamics $\mathbf{x}_{dd}$ (right) both plotted over
true data $\mathbf{x}$. Here $\sigma_{SNR}=0.04$, $\rho=5$, $s=0.5$
and an error of $E:=\left\Vert \mathbf{\widehat{w}-\mathbf{w}^{\star}}\right\Vert _{2}/\left\Vert \mathbf{w}^{\star}\right\Vert _{2}=0.0026$.
Middle row: color plot of $\log_{10}$ of the average error $E$ (left)
and standard deviation of the error $E$ (right) over 200 noisy trajectories
with $\sigma_{SNR}=0.04$ with increasing $\rho$ along the $x$-axis
and increasing $s$ along the $y$-axis. Bottom: decreasing error
trend for fixed $s=0.5$ for various $\sigma_{SNR}$. As with the
Duffing equation in Figure \ref{lownz_duff}, for each $\sigma_{SNR}$
the expected error falls roughly an order of magnitude below the $\sigma_{SNR}$
as $\rho$ increases, however the expected accuracy begins to break
down for larger $\sigma_{SNR}$, motivating the use of WSINDy\_AG
for large noise.}
\label{lownz_vp} 
\end{figure}

\subsection{Large-Noise Regime}

Figures \ref{duff_hnz} to \ref{lorenz_hnz} show that Strategy 2
(non-uniform grid) can be employed to discover the dynamics in the
large noise regime. The signal to noise ratio is $\sigma_{SNR}=0.1$
for the Duffing, Van der Pol and Lorenz equations, and $\sigma_{SNR}=0.05$
for Lotka-Volterra. In each case we set the weak differentiation parameters
to $p=2$ and $s=16$ and the width-at-half-max to $r_{whm}=30$ timepoints.
For the 2D systems, we use $K=6J=126$ test basis functions, while
for the Lorenz equation $K=4J=224$ were used. In each case the correct
terms were identified with relative $\ell_{2}$ coefficient error
less than $10^{-2}$, indicating approximately two significant digits.
We plot the noisy data $\mathbf{y}$, the true data $\mathbf{x}$
and the simulated data-driven dynamical systems $\mathbf{x}_{dd}$
in dynamo view and phase space to exemplify the separation of scales
and the severity of the corruption from noise. We extend $\mathbf{x}_{dd}$
by $50\%$ to show that the data-driven system captures the expected
limiting behavior. Unless otherwise specified, we set the sparsity
and regularization parameters to $\lambda=\frac{1}{4}\min_{\mathbf{w}^{\star}\neq0}|\mathbf{w}^{\star}|$
and $\gamma=0$ and use a trial basis $(f_{j})_{j\in[J]}$ consisting
of all polynomials up to degree 5 in the state variables.\\

For the Duffing equation (Figure \ref{duff_hnz}), the data-driven
trajectory $\mathbf{x}_{dd}$ diverges slightly from the true data
as the system relaxes to equilibrium but is qualitatively correct.
The recovered Van der Pol oscillator (Figure \ref{vp_hnz}) identifies
the correct limit cycle but the dominant timescale of $\mathbf{x}_{dd}$
is slightly shorter than the true data $\mathbf{x}$. For this reason
$\mathbf{x}_{dd}$ slowly diverges pointwise over time but does not
stray from the relevant regions of phase space. This reflects that
more accurate measurements are needed to recover systems with abrupt
changes. The recovered Lotka-Volterra trajectory (Figure \ref{lv_hnz})
is nearly indistiguishable from the true data, but we note that here
regularization was used ($\gamma=0.01$), as the system was nearly
singular, and the columns of $\Theta(\mathbf{y})$ were normalized
using the 2-norm. For the Lorenz attractor (Figure \ref{lorenz_hnz}),
the recovered trajectory remains close to the true trajectory up until
around $t=3$, after which the two diverge as is expected from chaotic
dynamics. Nevertheless the Lorenz attractor is captured.

\begin{figure}[H]
\begin{tabular}{c}
\begin{tabular}{cc}
\includegraphics[clip,width=0.45\textwidth]{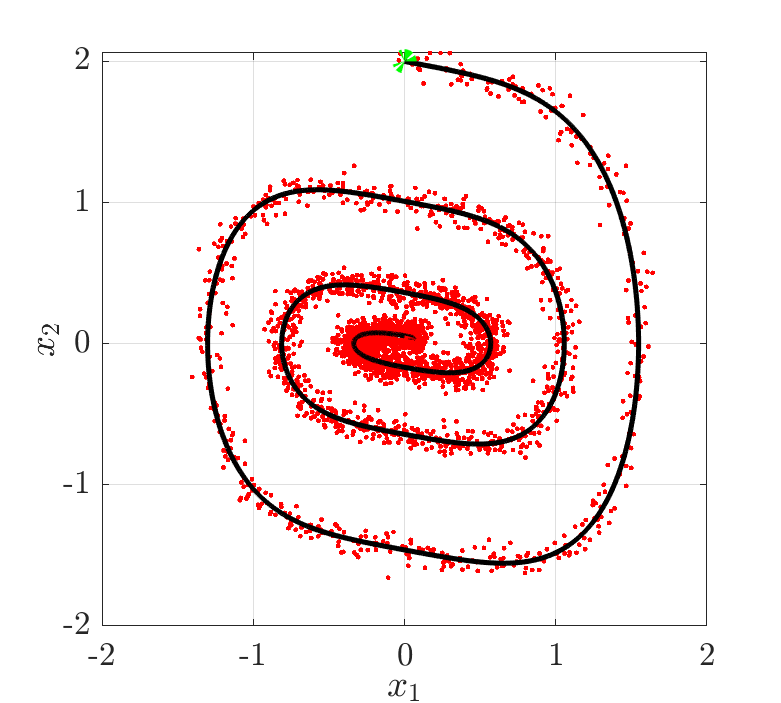}  & \includegraphics[clip,width=0.45\textwidth]{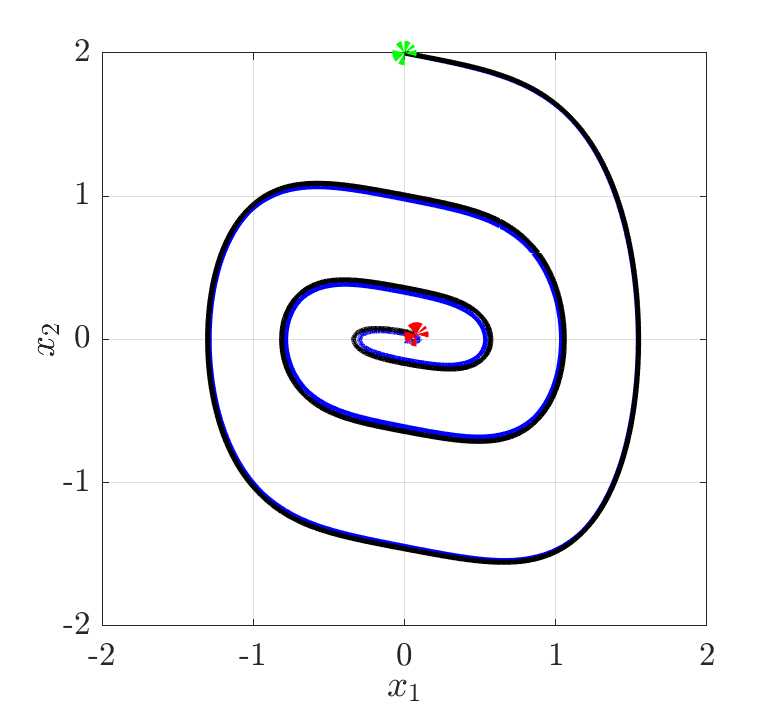} \tabularnewline
\end{tabular}\tabularnewline
\includegraphics[clip,width=0.95\textwidth]{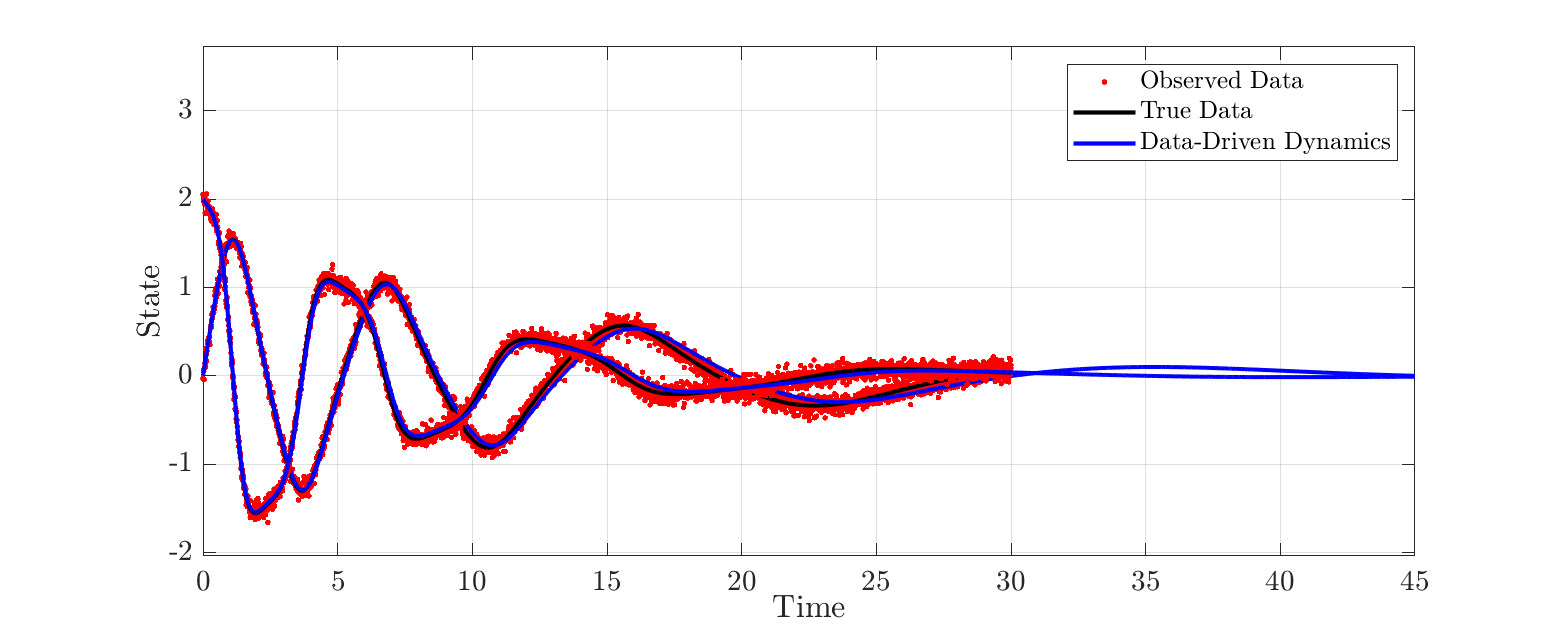} \tabularnewline
\tabularnewline
\end{tabular}\caption{Large-noise regime: Duffing Equation, same parameters as in Figure
\ref{lownz_duff}. Accurate recovery of the stable spiral with $\sigma_{SNR}=0.1$.
All correct terms were identified with an error in the weights of
$\left\Vert \mathbf{\widehat{w}-\mathbf{w}^{\star}}\right\Vert _{2}/\left\Vert \mathbf{w}^{\star}\right\Vert _{2}=0.007$
and $\left\Vert \mathbf{x}_{dd}-\mathbf{x}\right\Vert _{2}/\left\Vert \mathbf{x}\right\Vert _{2}=0.097$.
The number of basis functions used was $K=6J=126$ and the width-at-half-max
parameter was set to $r_{whm}=30$ time-points, resulting in $p=10$.}
\label{duff_hnz} 
\end{figure}

\begin{figure}[H]
\begin{tabular}{c}
\begin{tabular}{cc}
\includegraphics[clip,width=0.45\textwidth]{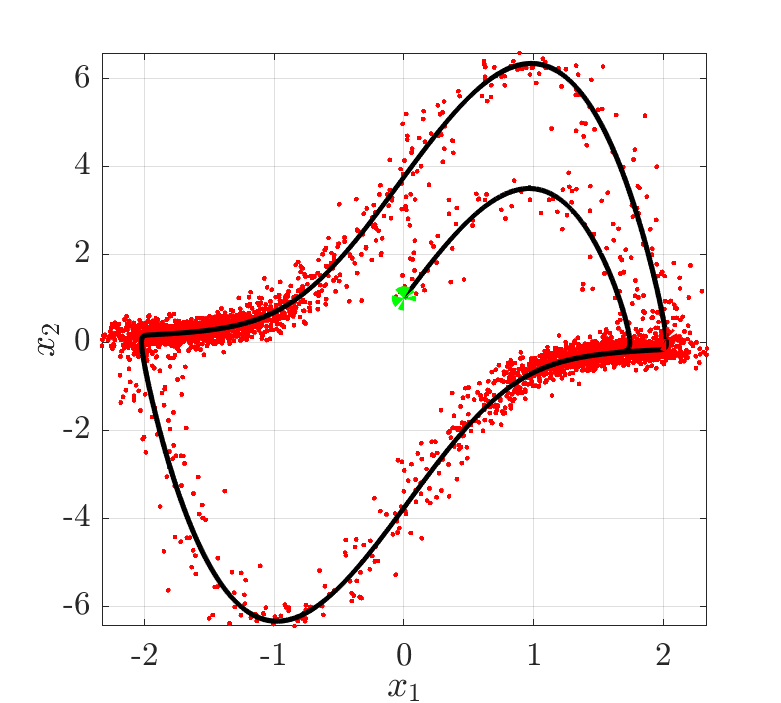}  & \includegraphics[clip,width=0.45\textwidth]{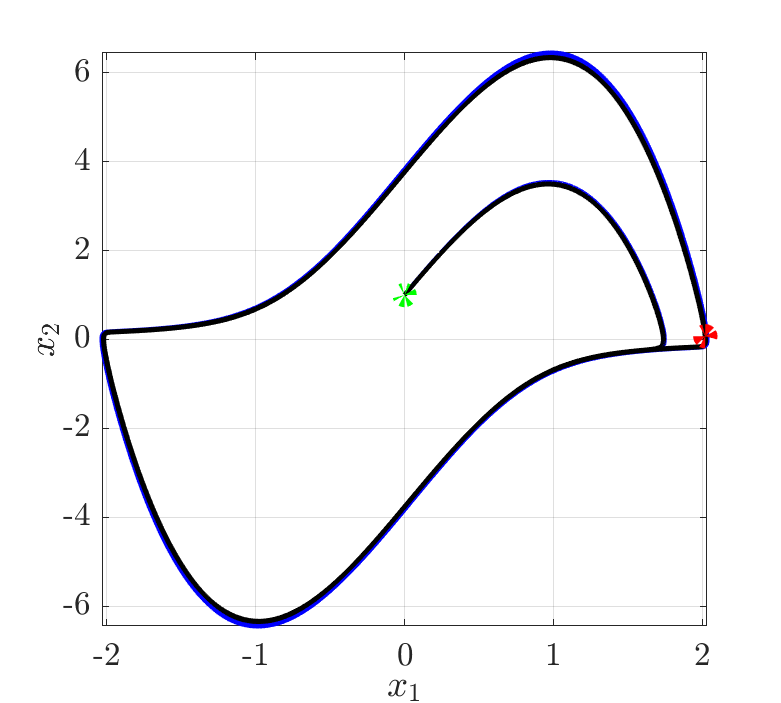} \tabularnewline
\end{tabular}\tabularnewline
\includegraphics[clip,width=0.95\textwidth]{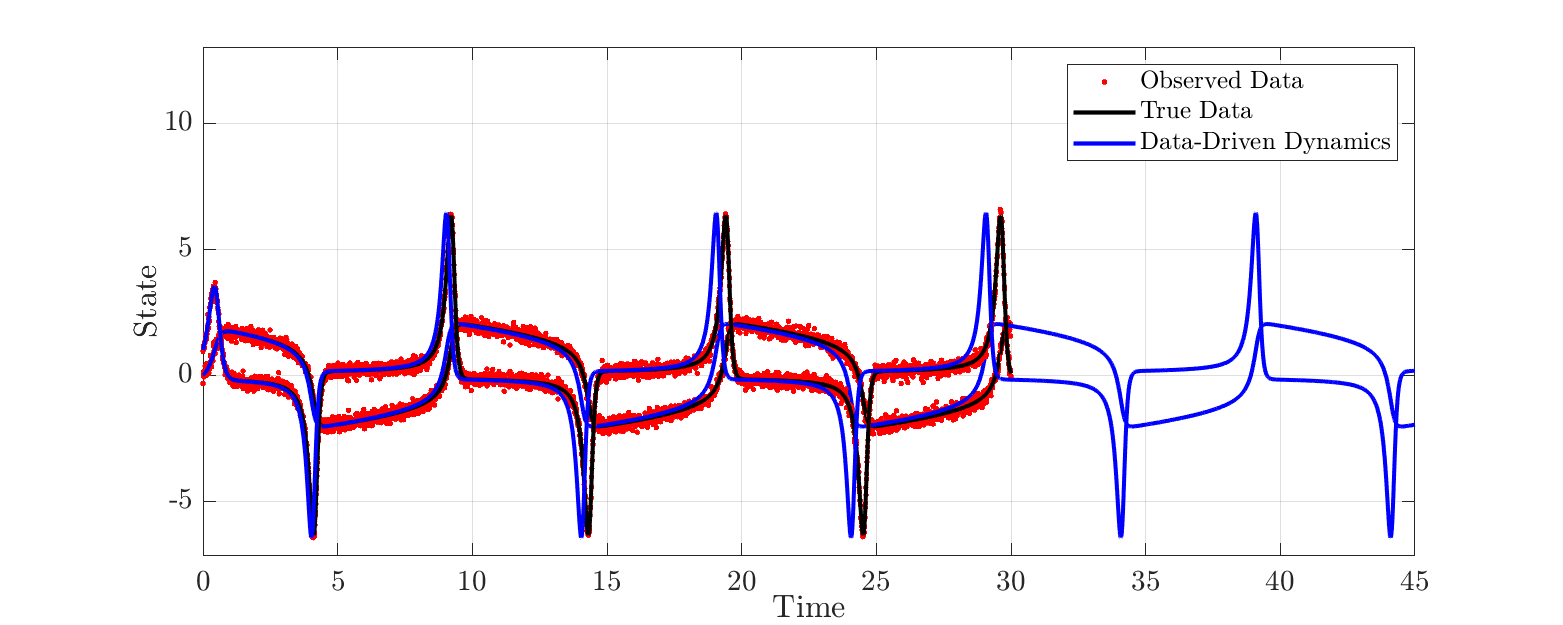} \tabularnewline
\tabularnewline
\end{tabular}\caption{Large-noise regime: Van der Pol oscillator, same parameters as in
Figure \ref{lownz_vp}. Accurate recovery of the limit cycle for $\sigma_{SNR}=0.1$.
All correct terms were identified with an error in the weights of
$\left\Vert \mathbf{\widehat{w}-\mathbf{w}^{\star}}\right\Vert _{2}/\left\Vert \mathbf{w}^{\star}\right\Vert _{2}=0.008$
and $\left\Vert \mathbf{x}_{dd}-\mathbf{x}\right\Vert _{2}/\left\Vert \mathbf{x}\right\Vert _{2}=0.56$.
The number of basis functions used was $K=6J=126$ and the width-at-half-max
parameter was set to $r_{whm}=30$ timepoints, resulting in $p=10$.
Here we see that the data-driven system $\mathbf{x}_{dd}$ traverses
the limit cycle with a slightly shorter period, resulting in a growing
pointwise error between the $\mathbf{x}_{dd}$ and the true state
$\mathbf{x}$.}
\label{vp_hnz} 
\end{figure}

\begin{figure}[H]
\begin{tabular}{c}
\begin{tabular}{cc}
\includegraphics[clip,width=0.45\textwidth]{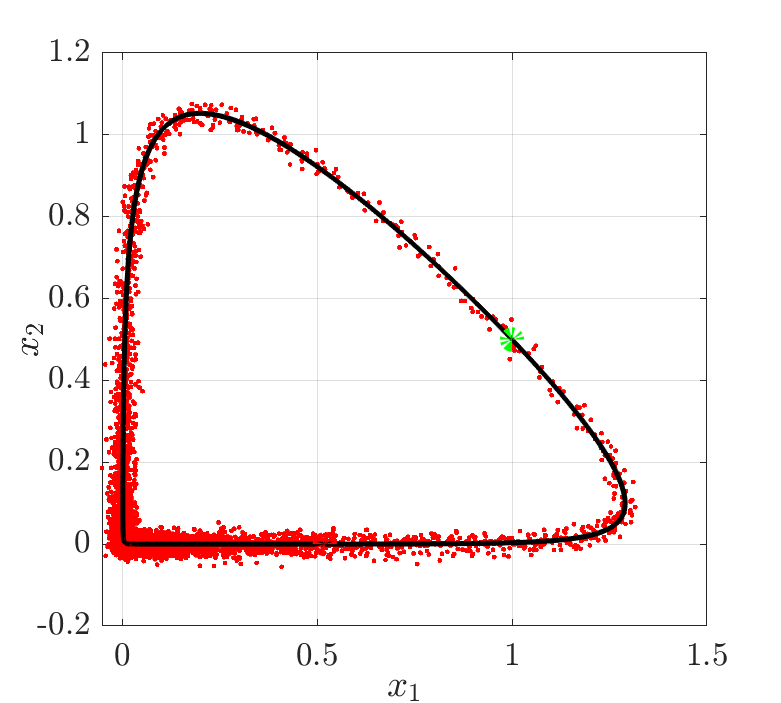}  & \includegraphics[clip,width=0.45\textwidth]{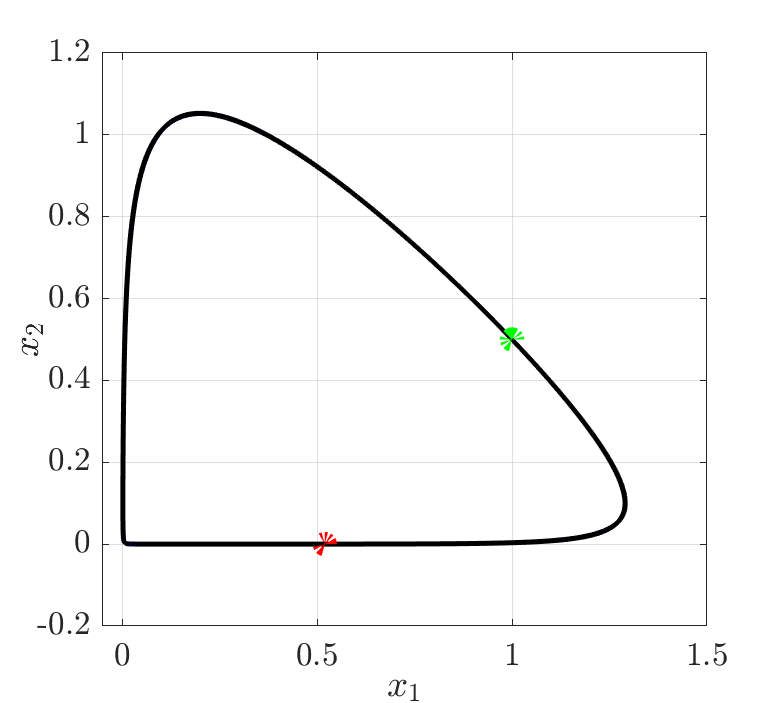} \tabularnewline
\end{tabular}\tabularnewline
\includegraphics[clip,width=0.95\textwidth]{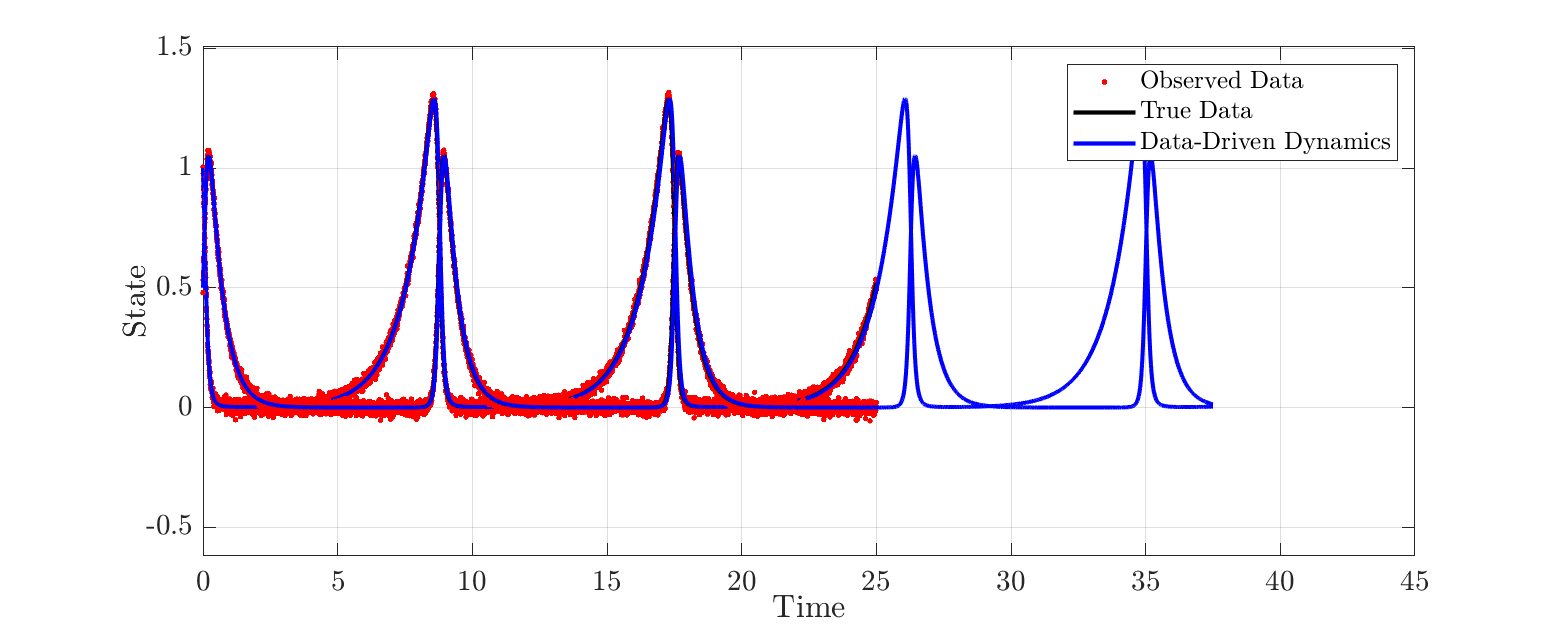} \tabularnewline
\tabularnewline
\end{tabular}\caption{Large-noise regime: Lotka-Volterra system with $\alpha=1,\beta=10$.
Accurate recovery of the limit cycle for $\sigma_{SNR}=0.05$. All
correct terms were identified with an error in the weights of $\left\Vert \mathbf{\widehat{w}-\mathbf{w}^{\star}}\right\Vert _{2}/\left\Vert \mathbf{w}^{\star}\right\Vert _{2}=0.0032$
and $\left\Vert \mathbf{x}_{dd}-\mathbf{x}\right\Vert _{2}/\left\Vert \mathbf{x}\right\Vert _{2}=0.065$.
The number of basis functions used was $K=6J=126$ and the width-at-half-max
parameter was set to $r_{whm}=30$ timepoints, resulting in $p=10$.
The data-diven system $\mathbf{x}_{dd}$ accurately captures the limit
cycle and traverses at the same speed as the true data $\mathbf{x}$.
Here we set $\gamma=0.01$ and normalized the columns of $\Theta(\mathbf{y})$
using the 2-norm as the original system is nearly linearly dependent.}
\label{lv_hnz} 
\end{figure}

\begin{figure}[H]
\begin{tabular}{c}
\begin{tabular}{cc}
\includegraphics[clip,width=0.45\textwidth]{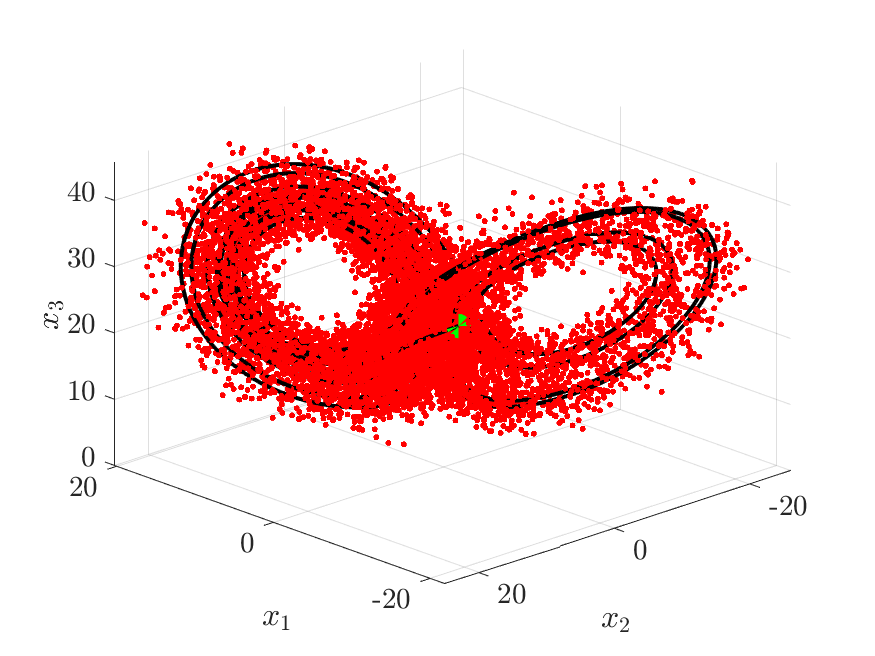}  & \includegraphics[clip,width=0.45\textwidth]{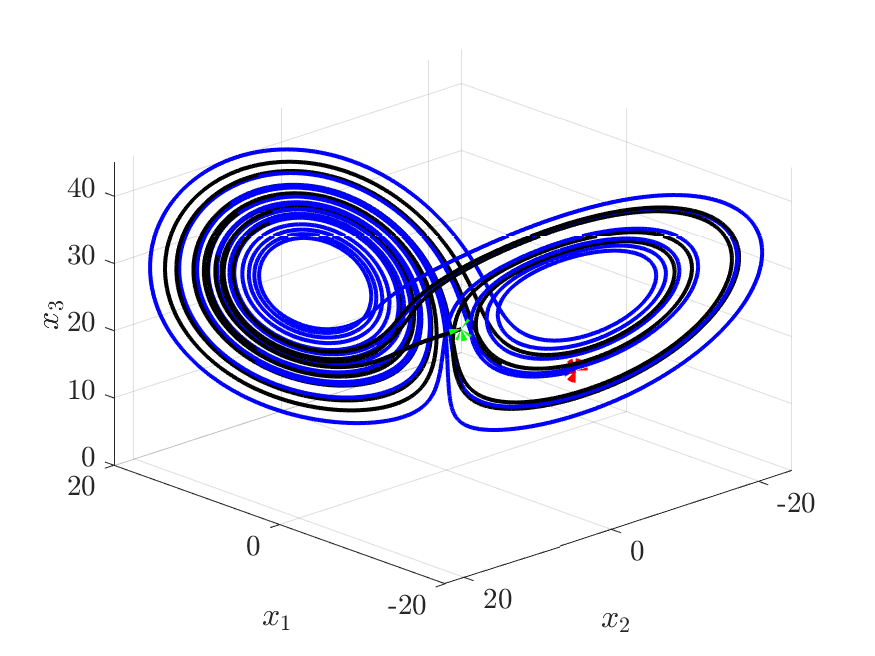} \tabularnewline
\end{tabular}\tabularnewline
\includegraphics[clip,width=0.95\textwidth]{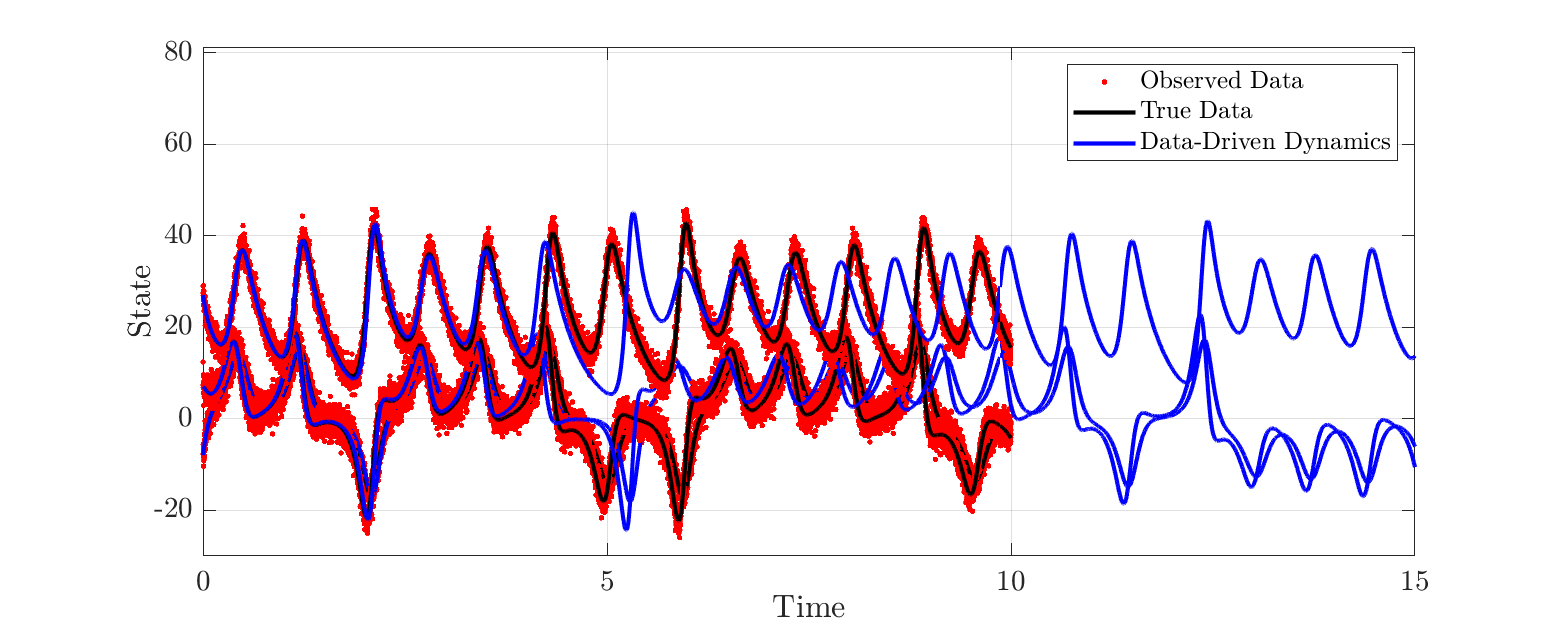} \tabularnewline
\tabularnewline
\end{tabular}
\caption{Large-noise regime: Lorenz system with $\mathbf{x}_{0}=[-8\ 7\ 27]^{T}$.
Accurate recovery of the strange attractor for $\sigma_{SNR}=0.1$.
All correct terms were identified with an error in the weights of
$\left\Vert \mathbf{\widehat{w}-\mathbf{w}^{\star}}\right\Vert _{2}/\left\Vert \mathbf{w}^{\star}\right\Vert _{2}=0.0091$.
Since the system is chaotic it does not make sense to measure the
pointwise error between the data-driven trajectory $\mathbf{x}_{dd}$
and the true data $\mathbf{x}$ for large times, but using data up
until $t=3$ (first 3000 timepoints) we can report a reasonable pointwise
agreement of $\left\Vert \mathbf{x}_{dd}-\mathbf{x}\right\Vert _{2}/\left\Vert \mathbf{x}\right\Vert _{2}=0.047$.
The number of basis functions used was $K=4J=224$ and the width-at-half-max
parameter was set to $r_{whm}=30$ timepoints, resulting in $p=10$.}
\label{lorenz_hnz} 
\end{figure}

\section{Concluding Remarks}

We have developed and investigated a data-driven model selection algorithm
based on the weak formulation of differential equations. The algorithm
utilizes the reformulation of the model selection problem as a sparse
regression problem for the weights $\mathbf{w}^{\star}$ of a candidate
function basis $(f_{j})_{j\in[J]}$ introduced in \cite{wang2011predicting}
and generalized in \cite{brunton2016discovering} as the SINDy algorithm.
Our Weak SINDy algorithm (WSINDy) can be seen as a generalization
of the sparse recovery scheme using integral terms found in \cite{schaeffer2017sparse},
where dynamics were recovered from the integral equation.

A natural line of inquiry is to consider how WSINDy compares with
conventional SINDy. There are several notable advantages of WSINDy;
in particular, by considering a weak form of the equations, WSINDy
completely avoids any evaluation of the pointwise derivatives which
cause significant accuracy problems in conventional SINDy. While our
work here is not the first to consider integrals of the differential
equation,\footnote{To the best of the author's knowledge \cite{schaeffer2017sparse}
was the first (and only other) such effort.} WSINDy is a much more general methodology with natural extensions
that draw upon the rich and well-established finite element literature.
Furthermore, the integration operation itself is a linear operator,
allowing estimation of covariance structures, which allow computation
of solutions in a weighted least squares framework (which is substantially
more accurate that ordinary least squares).

In our future work, we also plan to pursue direct computational comparisons
with conventional SINDy to illustrate concrete benefits of the WSINDy
framework. For example, our preliminary efforts (not reported here)
suggest that the relationship between noise and coefficient error
appears to be more predictable in WSINDy than conventional SINDy.
As Figures \ref{lownz_duff} and \ref{lownz_vp} suggest, WSINDy appears
to scale continuously with the SNR and without the use of noise filtering,
a trend which simply cannot be expected from pointwise derivative
approximations. Our preliminary efforts also suggest that the linear
systems are smaller, requiring fewer data points, but more overall
floating point operations. Clearly a more complete analysis is required
to fully understand the tradeoffs between WSINDy and SINDy.

Lastly, the most obvious extensions lie in generalizing the WSINDy
method for spatiotemporal datasets and problems with multiple timescales.
WSINDy as presented here in the context of ODEs is an exciting proof
of concept, with natural extensions to spatiotemporal and multiresolution
settings building upon the extensive results in numerical and functional
analysis for weak and variational formulations of physical problems.

\section{Acknowledgements}

This research was supported in part by the NSF/NIH Joint DMS/NIGMS
Mathematical Biology Initiative grant R01GM126559 and in part by the
NSF Computing and Communications Foundations Division grant CCF-1815983. Code used in this
manuscript is publicly available on GitHub at \url{https://github.com/dm973/WSINDy}.

\bibliographystyle{plain}
\bibliography{researchCU}

\end{document}